\numberwithin{equation}{section}
\theoremstyle{plain}
\newtheorem{exam}{Example}[section]
\newtheorem{theorem}[exam]{Theorem}
\newtheorem{lemma}[exam]{Lemma}
\newtheorem{remark}[exam]{Remark}
\newtheorem{definition}[exam]{Definition}
\begin{document}

\date{}

\title{Stable invariant manifold  for generalized ODEs   with applications  \\ to
  measure differential equations
\footnote{This paper was jointly supported by the Natural Science Foundation of Zhejiang Province (No. LZ24A010006),
and the National Natural Science Foundation of China (No. 11671176, 11931016).
}
}
\author
{
Weijie Lu$^{a}$\,\, \,\,
Paolo Piccione$^{b}$\,\,\,\,\,
Yonghui Xia$^{c}\footnote{Corresponding author. yhxia@zjnu.cn. }$
\\
{\small \textit{$^a$ School of Mathematics  Science,  Zhejiang Normal University, 321004, Jinhua, China}}\\
{\small \textit{$^b$ Departamento de Matem\'{a}tica, Universidade de S\~{a}o Paulo, S\~{a}o Paulo, Brazil}} \\
{\small \textit{ $^b$ School of Mathematics, Foshan University, 528000, Foshan, China}}\\
{\small { Email: luwj@zjnu.edu.cn; piccione@ime.usp.br; xiadoc@163.com; yhxia@zjnu.cn } }
}

\maketitle

\begin{abstract}
This paper establishes the stable invariant manifold for a new kind of differential equations defined by Kurzweil integral, so-called {\em generalized ODEs} on a Banach space.
The nonlinear generalized ODEs are formulated as
$$
\frac{dz}{d\tau}=D[\Lambda(t)z+F(z,t)],
$$
where $\Lambda(t)$ is a bounded linear operator on  a Banach space $\mathscr{Z}$ and
$F(z,t)$ is a nonlinear Kurzweil integrable function on $\mathscr{Z}$.
The letter $D$ represents that   generalized ODEs are defined via its solution,
and   $\frac{dz}{d\tau}$  only a notation. Hence, generalized ODEs are  fundamentally a notational representation of a class of integral equations.
 Due to the differences between the theory of generalized ODEs and ODEs,  it is difficult to extended  the stable manifold  theorem of ODEs to generalized  ODEs.
In order to overcome the   difficulty, we  establish a generalized Lyapunov-Perron equation in the frame of  Kurzweil  integral theory.
Subsequently, we present a stable invariant manifold theorem for nonlinear generalized ODEs when their linear parts exhibit an exponential dichotomy.
As effective applications, we finally derive results concerning the existence of stable manifold  for measure differential equations and impulsive differential equations.
  \\
  {\bf Keywords}:  Generalized ODEs;  Kurzweil  integral;  stable  manifold;  measure differential equations \\
{\bf MSC2020}: 34A36; 34D09; 37D10

\end{abstract}

\section{Introduction}
\subsection{Background on the generalized ODEs }


%

\quad  Recently, the generalized ordinary differential equations (for short, generalized ODEs) on  a Banach space have attracted extensive attention,  which has the form of:
\begin{equation}\label{1F}
  \frac{dz}{d\tau}=D [K(z,t)],
\end{equation}
where $K:\mathscr{Z}\times \mathbb{R}\to \mathscr{Z}$ is a given function on a Banach space $\mathscr{Z}$.
It was first  introduced by Kurzweil (\cite{K-CMJ1957,K-CMJ1958}) in 1957, with the aim of relaxing some classical results regarding the continuous dependence on parameters of solutions of ODEs.
Notice that by Schwabik (\cite[Remark 3.2]{S-BOOK1992}),
the letter $D$ of \eqref{1F} means that \eqref{1F} is a generalized differential equation, this concept being defined via its solution,
and  $\frac{dz}{d\tau}$ is only a notation.
Hence, generalized ODEs are essentially a notational representation of a class of integral equations.
Indeed, recall an example from Schwabik's book (see \cite[pp. 100]{S-BOOK1992}): we let $K(z,t)=f(t)$, where
 $f:[0,1]\to \mathbb{R}$ is  a continuous function whose derivative does not exist at any point in $[0,1]$.
Then,  from the definition of Kurzweil   integral (refer to Definition \ref{KW}), we see that
\[
\int_a^b DK(z(\tau),\eta)=\int_a^b Df(\eta)=f(b)-f(a), \quad \forall a,b  \in [0,1],
\]
which means that $z:[0,1]\to\mathbb{R}$ given by $z(\eta)=f(\eta)$ for $\eta\in [0,1]$ is a solution of
$\frac{dz}{d\tau}=DK(z,t)=Df(t)$, but it has no derivative at any point in the interval $[0,1]$.
This example precisely shows that generalized ODEs are only a notational representation of  integral equations.
Moreover, Schwabik in \cite{S-BOOK1992}  pointed out that  generalized ODEs are   formal equation-like objecst for which one has defined its solutions.
As a result, the study of generalized ODEs presents an intriguing and challenging research avenue.
For more comprehensive insights and detailed information, readers are encouraged to refer to the monograph of Bonotto-Federson-Mesquita (\cite{BFM-BOOK2021})
and the references therein.

 The conceptual frameworks and fundamental theory of generalized ODEs were established by  Schwabik (\cite{S-MB1996,S-MB1999,S-BOOK1992}).
 Later, Collegari et al. (\cite{CFF-CMJ}) proved the variation of constants formula for generalized ODEs;
 Bonotto et al.  (\cite{BFS-JDE2018,BFS-JDDE2020}) respectively presented the concept of exponential dichotomy and its robustness for linear generalized ODEs;
 Afonso et al. (\cite{ABFG-MN2012}) derived the     boundedness of solutions for generalized ODEs;
  Federson et al. (\cite{FGMT-N2022}) obtained the existence of periodic solutions for autonomous generalized ODEs;
  Gallegos et al. (\cite{Ga-JDE21}) first explored the exponential stability of Lyapunov functions under the framework of generalized  ODEs, and then 
 Andrade da Silva et al. (\cite{AMT-JDE2022}) proved the converse Lyapunov theorem for generalized ODEs;
  Bonotto et al. (\cite{BFM-JGA2023}) investigated the boundary value problems for generalized ODEs;
  Gallegos and Robledo (\cite{Ga-Pro24})  established the stability results 
 for generalized linear ODEs and explicitly linked them to the concept of variational stability introduced by S. Schwabik.
 Recently,   following the fundamental  works by Bonotto et al.  (\cite{BFS-JDE2018,BFS-JDDE2020}), 
 Gallegos and Robledo (\cite{Ga-JDDE24}) presented a criterion for exponential dichotomy of periodic generalized linear ODEs and gave an application to admissibility; Lu and Xia (\cite{LX-JDE}) established a
Hartman-Grobman type linearization theorem for nonlinear generalized ODEs.
 On the other hand, generalized ODEs encompass various types of other differential equations as special cases,
such as the classical ODEs, measure differential equations (MDEs), dynamic equations on time scales, functional differential equations (FDEs), as well as impulsive differential equations (IDEs).
There is a substantial amount of research in these fields, and we do not attempt to provide a detailed list.
However, a pretty application of generalized ODEs is MDEs, which have well studied, see e.g.,
Piccoli (\cite{P-ARMA}) established the fundamental theory of MDEs;
Piccoli and Rossi (\cite{P-DCDS})  studied the measure dynamics with probability vector fields and sources;
 Federson et al. (\cite{FMS-JDE2012}) clarified  the relation between of MDEs and generalized ODEs, and
obtained some results on the existence and uniqueness of solutions, continuous dependence  and periodic averaging for MDEs;
Meng and Zhang (\cite{MG-JDE,MG-PAMS}) presented the dependence of solutions and eigenvalues of MDEs, and claimed the  extremal problems of eigenvalues for MEDs;
Wen and Zhang (\cite{Wen-DCDSB,Wen-JDE}) were devoted to study the principal eigenvalues problem of MDEs;
Chu et al. \cite{Chu-ADV,Chu-MA} presented the sharp bounds   for Dirichlet eigenvalues problem.
Another application of generalized ODEs is IDEs, see e.g., the work of  Federson and Schwabik \cite{FS-DIE2006}, Afonso et al. \cite{ABFS-JDE2011} and the references therein.

\subsection{Motivations and contributions of this paper}

\quad
 The theory of stable invariant manifold, which was initiated in Hadamard \cite{Hadamard} and Perron \cite{Perron} independently,
is one of the most fundamental results in dynamical systems.
The  invariant manifold theory  has been studied by many scholars in a variety of dynamical systems, such as,  invariant manifolds  on infinite-dimensional spaces   (\cite{Chow-Lu-1}),
manifolds on complex dynamics (\cite{FS-MA1977}),
stable manifolds of holomorphic hyperbolic maps (\cite{FS-IJM2004}),
stable manifolds with generalized exponential dichotomy (\cite{ZWN-Chinese}), invariant manifolds without gap condition (\cite{ZWM-ADM})),
inertial manifolds for PDEs (\cite{Mallet-Paret-Sell}),
 stable manifolds for nonuniform hyperbolicity (\cite{JDDE-Bento}),
 invariant manifolds for random dynamical systems (\cite{Shen-SCM}),
 invariant manifolds for noninstantaneous impulsive systems (\cite{Li-Wang1,Li-Wang2}), and others.

Observe that the works on the stable manifolds involve the hyperbolicity of linear systems.
  While exponential dichotomy plays a crucial role in generalizing the hyperbolicity of linear autonomous systems to linear nonautonomous systems.
   Bonotto, Federson and Santos \cite{BFS-JDE2018} presented the concept of exponential dichotomy for the linear generalized ODEs
\begin{equation}\label{1}
  \frac{dz}{d\tau}=D[\Lambda(t)z],
\end{equation}
on a Banach space $\mathscr{Z}$,   where $\Lambda(t)$ is a bounded linear operator on $\mathscr{Z}$.
 They  gave some sufficient and necessary conditions for Eq. \eqref{1} to ensure that \eqref{1} admits an exponential dichotomy.

Due to the  differences between the generalized ODEs and classical ODEs, a very natural question arises:
 does a stable invariant manifold exist under the framework of generalized ODEs?
 In this paper, we consider the stable invariant manifold for a class of generalized ODEs.
Our main aim is to establish the stable invariant manifold for sufficiently small perturbations of an exponential dichotomy.
  Furthermore, we prove that any solutions of generalized ODEs outside the stable manifold are unbounded.
  In particular, we obtain a generalized Lyapunov-Perron equation of the form (see Lemma \ref{lemma1} below)
\begin{align*}
  z(t)=& \mathscr{V}(t,s)P(s)z(s)+\int_s^t DF(z(\tau),\gamma)-\int_{s}^t d_{\sigma}[\mathscr{V}(t,\sigma)P(\sigma)]\int_s^\sigma DF(z(\tau),\gamma) \\
  &+ \int_{t}^\infty d_{\sigma}[\mathscr{V}(t,\sigma)(Id-P(\sigma))] \int_s^\sigma DF(z(\tau),\gamma),\quad \mathrm{for} \; t\geq s,
\end{align*}
where $\mathscr{V}(t,s)$ is the fundamental operator of Eq. \eqref{1} and $P:\mathbb{R}\to \mathscr{Z}$ is a projection.
    We remark that unlike the integral form of ODEs, IDEs and other differential equations,
the presence of the term $\int_s^t DF(z(\tau),\gamma)$ at the right-hand side of this generalized Lyapunov-Perron  integral increases the difficulties of the proof (see Lemmas \ref{lemma1} and \ref{lemma3} below).
  As applications, we derive results on the existence of stable manifold for measure differential equations (MDEs) and impulsive differential equations (IDEs).

\subsection{Outline of this paper}

\quad    The rest of this paper is organized as follows.
   In Section 2, we present some basic theory of generalized ODEs,
which includes regulated functions,  bounded variation functions, Kurzweil integrals and some result on the  existence and  uniqueness  of solutions and dichotomy theory, as well as others.
  In Section 3, we focus on the stable manifold for generalized ODEs. Theorem \ref{theorem1} gives our main result of this paper.
  Lemma \ref{lemma1} reports the existence of the generalized Lyapunov-Perron type integral equation for nonlinear generalized ODEs and obtains an inverse result.
  Lemma \ref{lemma3} presents the uniqueness of the solution for the Lyapunov-Perron integral due to the contraction fixed point theory.
  Finally, we prove the existence, invariance of the stable manifold, and show  unboundedness of the solutions outside the stable manifold.
  In Section 4, we give the existence of stable manifolds for two classes of differential equations.
  The results on the stable manifold for a class of MDEs and IDEs are described in Sections 4.1, 4.2, respectively.
  These results are obtained by using the correspondence between generalized ODEs and MDEs (or IDEs).

\section{Basic notations and concepts for generalized ODEs}
\subsection{Regulated functions,   bounded variation functions  and Kurzweil integral}

 \quad  Let $(\mathscr{Z},\|\cdot\|)$ be the Banach space. Given $c,d\in\mathbb{R}$ and $c<d$,   we say that $g:[c,d]\to \mathscr{Z}$ is a
{\em regulated function} if  the  limits exist:
\[
g(t^-)=\lim\limits_{r \to t^-} g(r), \; t\in(c,d] \quad  \mathrm{and} \quad g(t^+)= \lim\limits_{r\to t^+} g(r), \; t\in[c,d).
\]

Define
\[
\mathscr{G}([c,d],\mathscr{Z})=\{g:[c,d]\to \mathscr{Z}| g \;\mathrm{ is} \; \mathrm{a} \; \mathrm{regulated}  \; \mathrm{function} \;
\mathrm{with}  \; \sup_{t\in[c,d]}\|g(t)\|<\infty\}
\]
and $\|g\|_0:=\sup\|g(t)\|$, $t\in[c,d]$. Then $(\mathscr{G}([c,d],\mathscr{Z}),\|\cdot\|_0)$ is a Banach space
(see \cite{Honig-book}).

 We say that a finite set $D=\{t_0,t_1,\cdots, t_j\}\subset [c,d]$ satisfying $c=t_0\leq t_1\leq \cdots\leq t_j=d$ is   a $division$ of $[c,d]$.
If $|D|$ is the number of subintervals of $[t_{j-1},t_j]$ of a division $D$ of $[c,d]$, then we write $D=\{t_0,\cdots,t_{|D|}\}$.
 Denote  by $\mathscr{D}[c,d]$ the set of all division of $[c,d]$.
We say that   $g:[c,d]\to\mathscr{Z}$ is   a {\em variation function}  if
\[
  \mathrm{var}_{c}^{d} g:=\sup_{D\in\mathscr{D}[c,d]}\sum_{j=1}^{|D|}\|g(t_j)-g(t_{j-1})\|.
\]
  If $\mathrm{var}_{c}^{d} g<\infty$, then $g$ is a   bounded variation function   on  $[c,d]$.

  Define
\[
\mathscr{BVF}([c,d],\mathscr{Z})=\{g\in\mathscr{Z}|g \; \mathrm{is} \; \mathrm{a} \; \mathrm{bounded} \; \mathrm{variation} \; \mathrm{function}\; \mathrm{with} \; \|g(c)\|+\mathrm{var}_{c}^d g <\infty\}
\]
and $\|g\|_{\mathscr{BVF}}:=\|g(c)\|+\mathrm{var}_{c}^d g$.
  Then $(\mathscr{BVF}([c,d],\mathscr{Z}),\|\cdot\|_{\mathscr{BVF}})$ is a Banach space.
  Furthermore, $\mathscr{BVF}([c,d],\mathscr{Z}) \subset \mathscr{G}([c,d],\mathscr{Z})$ (see \cite{Honig-book}).

Next, we  recall the  notion of Kurzweil integral, which is defined by \cite{K-CMJ1957,S-BOOK1992}.
We say that a tagged division of  $[c,d]$ is a finite set of point-interval pairs $D=\{(s_j,[t_{j-1},t_j]): j=1,2,\cdots,|D|\}$,
 where $c=t_0\leq t_1 \leq \cdots \leq t_{|D|}=d$ is a division of $[c,d]$ and $s_j \in [t_{j-1},t_j]$ is the $tag$ of  $[t_{j-1},t_j]$.
A $gauge$ on $[c,d]$ is an arbitrary  positive function $\delta:[c,d]\to (0,\infty)$.
A tagged division $D=\{(s_j,[t_{j-1},t_j]): j=1,2,\cdots,|D|\}$ of $[c,d]$ is $\delta$-fine for any gauge $\delta$ on $[c,d]$ if
\[
[t_{j-1},t_j]\subset (s_j-\delta(s_j),s_j+\delta(s_j)).
\]

\begin{definition}\label{KW}(Kurzweil integrable)
We say that
 a  function $V:[c,d]\times [c,d]\to \mathscr{Z}$ is   {\em Kurzweil integrable} on $[c,d]$  if there exists a unique element $J\in \mathscr{Z}$ satisfying for any $\epsilon>0$,
there is a gauge $\delta$ of $[c,d]$ such that for each $\delta$-fine tagged division $D=\{(s_j,[t_{j-1},t_j]), j=1,2,\cdots,|D| \}$ of $[c,d]$,
we have
\[\|K(V,d)-J\|<\epsilon,\]
 where $K(V,d)=\sum_{j=1}^{|D|} [V(s_j,t_j)-V(s_j,t_{j-1})]$, and we write $J=\int_c^d DV(s,t)$ in this case.
\end{definition}

\subsection{Fundamental theory for the linear generalized ODEs}

\quad  Let $\mathscr{B}(\mathscr{Z})$ be the space of all bounded linear operators on a  Banach space $(\mathscr{Z},\|\cdot\|)$.
  Consider the linear generalized ODEs
\begin{equation}\label{LGODE}
  \frac{dz}{d\tau}=D[\Lambda(t)z]
\end{equation}
on $\mathscr{Z}$, where  $\Lambda:\mathbb{R}\to \mathscr{B}(\mathscr{Z})$. As point out in  \cite{S-BOOK1992}, a function $z:[c,d]\to\mathscr{Z}$
is called  a solution of \eqref{LGODE} if and only if
\begin{equation}\label{Z1}
   z(d)=z(c)+\int_{c}^{d} D[\Lambda(s)z(\tau)].
\end{equation}
  The integral on the right-hand side of \eqref{Z1} is a Kurzweil integral,
  which is denoted by the Perron-Stieltjes integral $\int_c^d d[\Lambda(s)]z(s)$ (\cite{S-BOOK1992,S-MB1996}), since we represent $\int_c^d D[\Lambda(t)z(\tau)]$ as Riemann-Stieltjes sum taking the form of
$\sum [\Lambda(t_j)-\Lambda(t_{j-1}]z(\tau_j)$.

   Given an interval $I\subset \mathbb{R}$, throughout this paper we assume that:
\\
(A1)  $\Lambda\in \mathscr{BVF}([c,d],\mathscr{B}(\mathscr{Z}))$ for  $[c,d]\subset I$;
\\
(A2) $(Id+[\Lambda(t^+)-\Lambda(t)])^{-1}\in \mathscr{B}(\mathscr{Z})$ for $t\in I \backslash \sup(I)$ and
$(Id-[\Lambda(t)-\Lambda(t^-)])^{-1}\in \mathscr{B}(\mathscr{Z})$ for  $t\in I \backslash \inf(I)$,
where $\mathrm{Id}$ is an identity operator, $\Lambda(t^+)=\lim\limits_{r\to t^+}\Lambda(r)$ and $\Lambda(t^-)=\lim\limits_{r\to t^-}\Lambda(r)$.

 The above assumptions guarantee the existence and the uniqueness for the solution of \eqref{LGODE}.

\begin{lemma}(\cite{S-MB1999}, Theorem 2.10)
Assume that (A1) and (A2) hold.
Given $(t_0,z_0)\in I\times \mathscr{Z}$.
Then the following linear GODE
\begin{equation}\label{Z2}
  \begin{cases}
  \frac{dz}{d\tau}=D[\Lambda(t)z(\tau)],\\
  z(t_0)=z_0,
  \end{cases}
\end{equation}
admits a unique solution on $I$.
\end{lemma}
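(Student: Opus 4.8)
\noindent\emph{Proof idea.} The plan is to turn \eqref{Z2} into the fixed point problem
\[
z(t)=z_{0}+\int_{t_{0}}^{t}D[\Lambda(s)z(\tau)],\qquad t\in I,
\]
which, by the definition of a solution recalled in \eqref{Z1}, is equivalent to \eqref{Z2}, and to solve it first on an arbitrary compact subinterval $[c,d]\subset I$ with $t_{0}\in[c,d]$. On $[c,d]$ I would work in the Banach space $\mathscr{G}([c,d],\mathscr{Z})$ and set $(\mathcal{T}z)(t)=z_{0}+\int_{t_{0}}^{t}D[\Lambda(s)z(\tau)]$. The first step is to check that $\mathcal{T}$ maps $\mathscr{G}([c,d],\mathscr{Z})$ into itself: by (A1) and the standard properties of the Kurzweil (Perron--Stieltjes) integral, $\int_{t_{0}}^{t}D[\Lambda(s)z(\tau)]$ exists for every $t$ whenever $z$ is regulated, its indefinite integral is again regulated, and one has the basic estimate
\[
\Bigl\|\int_{u}^{v}D[\Lambda(s)z(\tau)]\Bigr\|\le\bigl(\mathrm{var}_{u}^{v}\Lambda\bigr)\|z\|_{0},\qquad [u,v]\subseteq[c,d];
\]
a solution of \eqref{Z2} on $[c,d]$ is then precisely a fixed point of $\mathcal{T}$.

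For local existence and uniqueness the relevant object is the bounded nondecreasing function $v(t):=\mathrm{var}_{c}^{t}\Lambda$. The estimate above yields $\|(\mathcal{T}z_{1}-\mathcal{T}z_{2})(t)\|\le|v(t)-v(t_{0})|\,\|z_{1}-z_{2}\|_{0}$, so on any subinterval over which the total variation of $\Lambda$ is $<1$ — or, more robustly, after replacing $\|\cdot\|_{0}$ by a Bielecki-type weighted norm $\|z\|_{\ast}=\sup_{t}e^{-\lambda v(t)}\|z(t)\|$ — the map $\mathcal{T}$ is a contraction and produces a unique local solution. The genuine difficulty, with no counterpart in the classical Picard--Lindel\"of setting, is that $v$ need not be continuous: $\Lambda$ may jump by an arbitrarily large amount, so near such a jump point no contraction is available and the naive iteration diverges. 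At a jump point $t$ one instead reads off from \eqref{Z1} the jump relations
\[
z(t^{+})=z(t)+[\Lambda(t^{+})-\Lambda(t)]z(t),\qquad z(t^{-})=z(t)-[\Lambda(t)-\Lambda(t^{-})]z(t),
\]
and solves them explicitly for the value $z(t)$: from the left limit one gets $z(t)=(\mathrm{Id}-[\Lambda(t)-\Lambda(t^{-})])^{-1}z(t^{-})$, and from the right limit $z(t)=(\mathrm{Id}+[\Lambda(t^{+})-\Lambda(t)])^{-1}z(t^{+})$. This is exactly where (A2) is used, and its domain restrictions ($t\neq\sup I$, $t\neq\inf I$) are precisely what is needed at the ends of $I$. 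Since $\Lambda\in\mathscr{BVF}([c,d],\mathscr{B}(\mathscr{Z}))$, only finitely many points of $[c,d]$ carry a jump of size $\ge\tfrac12$; between consecutive such points a standard continuation argument applies — the a priori bound $\|z(t)\|\le\|z(t_{0})\|\exp(\mathrm{var}_{t_{0}}^{t}\Lambda)$ keeps the solution bounded on compact subintervals, so the maximal solution cannot stop before reaching the next exceptional point, and the remaining jumps, being $<1$, never obstruct stepping forward — and the local solutions are glued across the exceptional points by the jump formulas above. (Alternatively, one may run the same construction for the operator equation $\frac{dX}{d\tau}=D[\Lambda(t)X]$, $X(t_{0})=\mathrm{Id}$, obtain the fundamental operator $\mathscr{V}(\cdot,t_{0})$, and set $z(t)=\mathscr{V}(t,t_{0})z_{0}$.)

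Finally one passes to all of $I$ by exhausting it with compact subintervals and matching the pieces by uniqueness. Uniqueness on $I$ itself follows by applying the whole argument to the difference $w=z_{1}-z_{2}$ of two solutions: then $w(t_{0})=0$ and $\|w(t)\|\le\bigl|\int_{t_{0}}^{t}\|w(s)\|\,\mathrm{d}v(s)\bigr|$, whence $w\equiv 0$ by a Gr\"onwall-type inequality for the Kurzweil--Stieltjes integral, the control of the jumps of the integrator $v$ being again supplied by (A2). I expect the treatment of the jumps of $\Lambda$ — both solving the jump relations via (A2) and gluing the local solutions across them — to be the main obstacle, since this is exactly the part of the argument that has no analogue in the classical theory.
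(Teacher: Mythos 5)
The paper itself offers no proof of this lemma: it is quoted directly from Schwabik \cite{S-MB1999} (Theorem 2.10), so there is no in-paper argument to compare yours against. What you propose is essentially a reconstruction of the standard proof of that cited result, and its skeleton is sound: the fixed-point formulation $z(t)=z_0+\int_{t_0}^{t}d[\Lambda(s)]z(s)$ in the space of regulated (or locally BV) functions, the basic estimate by $\mathrm{var}\,\Lambda$, local contraction on subintervals of small variation, explicit resolution of the jump relations $z(t^-)=(\mathrm{Id}-[\Lambda(t)-\Lambda(t^-)])z(t)$ and $z(t^+)=(\mathrm{Id}+[\Lambda(t^+)-\Lambda(t)])z(t)$ via (A2), gluing across the finitely many large jumps, and a Stieltjes--Gronwall inequality. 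Two caveats are worth making explicit. First, the Bielecki-weighted-norm aside does not actually bypass the difficulty at large jumps: the key weight estimate $\int_{u}^{t}e^{\lambda v}\,dv\le \lambda^{-1}\bigl(e^{\lambda v(t)}-e^{\lambda v(u)}\bigr)$ fails across a sizeable jump of $v$, so the separate treatment of jump points you describe is not an optional refinement but the essential mechanism (you do acknowledge this). Second, since the lemma asserts a unique solution on all of $I$, continuation and uniqueness backward in time ($t<t_0$) are also claimed, and these do not follow from the forward Gronwall estimate alone: crossing a right-discontinuity backwards requires solving $z(t)=(\mathrm{Id}+[\Lambda(t^+)-\Lambda(t)])^{-1}z(t^+)$, i.e.\ the second half of (A2), and the backward analogue of your covering/gluing argument should be stated symmetrically. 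With those points spelled out, your argument matches the standard proof of the quoted theorem.
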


\begin{lemma}\label{ppp-lemma}(\cite{CFF-CMJ}, Theorem 4.3)
We say that an operator $V:I\times I\to \mathscr{B}(\mathscr{Z})$ is  a fundamental solution operator of \eqref{LGODE} if
\begin{equation*}
  V(t,s)=Id+\int_s^t d[\Lambda(r)]V(r,s), \quad t,s\in I,
\end{equation*}
and for any fixed $s\in I$,  $V(\cdot,s)$ is  locally bounded variation in $I$.
  Furthermore, $z(t)=V(t,s)z_s$ is a unique solution of \eqref{Z2}. 

\end{lemma}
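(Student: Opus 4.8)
The plan is to reduce the whole statement to the scalar existence--uniqueness result already quoted, namely the preceding Lemma (\cite{S-MB1999}, Theorem 2.10). First I would construct $V$. The cleanest route is to read the operator equation $V(t,s)=Id+\int_s^t d[\Lambda(r)]V(r,s)$ as itself being a linear generalized ODE of the form \eqref{LGODE}, but posed on the Banach algebra $\mathscr{B}(\mathscr{Z})$: the relevant ``coefficient'' is the left--multiplication operator $L_{\Lambda(t)}\in\mathscr{B}(\mathscr{B}(\mathscr{Z}))$, $L_{\Lambda(t)}X=\Lambda(t)X$, and the initial value is $V(s,s)=Id$. Since $\Lambda\in\mathscr{BVF}([c,d],\mathscr{B}(\mathscr{Z}))$ on compact subintervals and $\|L_{\Lambda(t)}\|\le\|\Lambda(t)\|$, we get $L_{\Lambda}\in\mathscr{BVF}([c,d],\mathscr{B}(\mathscr{B}(\mathscr{Z})))$; moreover $(Id+[L_{\Lambda(t^+)}-L_{\Lambda(t)}])^{-1}=L_{(Id+[\Lambda(t^+)-\Lambda(t)])^{-1}}$ is bounded precisely by (A2), and similarly for the left jump. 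Thus (A1)--(A2) lift to the $\mathscr{B}(\mathscr{Z})$--valued equation, and the preceding Lemma yields a unique solution $V(\cdot,s)$ on all of $I$, which is automatically locally of bounded variation since $\mathrm{var}_{t_0}^{t}V(\cdot,s)\le \|V(\cdot,s)\|_0\,\mathrm{var}_{t_0}^{t}\Lambda<\infty$ on any compact $[t_0,t]\subset I$.

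Next I would verify that $z(t):=V(t,s)z_s$ solves \eqref{Z2}. Putting $t=s$ in the integral equation for $V$ gives $V(s,s)=Id$, hence $z(s)=z_s$. For the integral identity I would use that the evaluation functional $\mathscr{B}(\mathscr{Z})\ni X\mapsto Xz_s\in\mathscr{Z}$ is bounded linear and therefore commutes with the Kurzweil (Perron--Stieltjes) integral --- concretely, the Riemann--Stieltjes sums $\sum[\Lambda(t_j)-\Lambda(t_{j-1})]V(\tau_j,s)$ are simply multiplied on the right by $z_s$, turning them into the sums for the $\mathscr{Z}$--valued integrand $r\mapsto V(r,s)z_s$. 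Consequently
\[
z(t)=V(t,s)z_s=\Big(Id+\int_s^t d[\Lambda(r)]V(r,s)\Big)z_s=z_s+\int_s^t d[\Lambda(r)]\big(V(r,s)z_s\big)=z_s+\int_s^t d[\Lambda(r)]z(r),
\]
which is exactly \eqref{Z1} on every $[c,d]\subset I$ together with the initial condition, so $z$ is a solution of \eqref{Z2}.

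Finally, uniqueness of this solution is immediate from the preceding Lemma, which asserts that \eqref{Z2} has a unique solution on $I$. If one prefers a self-contained argument, the difference $w=z_1-z_2$ of two solutions satisfies $w(t)=\int_s^t d[\Lambda(r)]w(r)$ with $w(s)=0$, and a Gronwall inequality for Perron--Stieltjes integrals (using $\|\int_a^b d[\Lambda(r)]g(r)\|\le\|g\|_0\,\mathrm{var}_a^b\Lambda$ and the fact that (A1)--(A2) exclude the degeneracies at the jumps of $\Lambda$) forces $w\equiv 0$.

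I expect the only genuinely delicate point to be the construction and well-definedness of the operator $V$ itself: one must make sense of $\int_s^t d[\Lambda(r)]V(r,s)$ as a Perron--Stieltjes integral in $\mathscr{B}(\mathscr{Z})$, which requires $V(\cdot,s)$ to be bounded and regulated with locally finite variation, uniformly enough for the integral estimates to close. The ``lift to $\mathscr{B}(\mathscr{Z})$'' device above is what makes this clean, since it lets one invoke the already-established scalar theory verbatim; everything else --- the commutation of a bounded operator with the Kurzweil integral, reading off $V(s,s)=Id$, and uniqueness --- is routine.
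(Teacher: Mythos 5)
This lemma is quoted in the paper from Collegari--Federson--Frasson (\cite{CFF-CMJ}, Theorem 4.3) and no proof is given there, so there is no in-paper argument to compare against; judged on its own, your proposal is sound and in fact follows the standard route by which this result is established in the literature, namely viewing the defining identity $V(t,s)=Id+\int_s^t d[\Lambda(r)]V(r,s)$ as a linear generalized ODE (equivalently, a linear Stieltjes integral equation in the sense of Schwabik) for an unknown taking values in the Banach space $\mathscr{B}(\mathscr{Z})$, and then pushing the solution onto $\mathscr{Z}$ by the evaluation map $X\mapsto Xz_s$, which commutes with the Kurzweil integral at the level of Riemann--Stieltjes sums exactly as you say. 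Your verification that (A1)--(A2) lift to the coefficient $L_{\Lambda(t)}$ is correct (indeed $\|L_A\|=\|A\|$ and $(Id+L_{\Delta^+\Lambda(t)})^{-1}=L_{(Id+\Delta^+\Lambda(t))^{-1}}$, and likewise for the left jumps), so the quoted existence--uniqueness lemma applies verbatim on $\mathscr{B}(\mathscr{Z})$. Two small points to tighten: the estimate $\mathrm{var}_{t_0}^{t}V(\cdot,s)\le \|V(\cdot,s)\|_0\,\mathrm{var}_{t_0}^{t}\Lambda$ presupposes that $V(\cdot,s)$ is bounded on compacts, which you should justify first (solutions of linear generalized ODEs with $\Lambda$ of locally bounded variation are regulated, or use a Stieltjes--Gronwall bound as in the paper's Example); and your alternative uniqueness argument via Gronwall needs care at the jump points of $\Lambda$ (forward uniqueness is unproblematic, but two-sided uniqueness genuinely uses the invertibility hypotheses in (A2)), though this is moot since you may simply invoke the quoted uniqueness lemma, as you do.
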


\begin{lemma}(\cite{CFF-CMJ}, Theorem 4.4)
  The operator $V:I\times I \to \mathscr{B}(\mathscr{Z})$ has the following properties:\\
(1) $V(t,t)=Id$;\\
(2) for any $[c,d]\subset I$, there is a positive constant $C_v>0$ satisfying
\[\begin{split}
&\|V(t,s)\|\leq C_v, \quad t,s\in[c,d], \quad \mathrm{var}_c^d V(t,\cdot)\leq C_v,\quad t\in[c,d],\\
&\mathrm{var}_c^d V(\cdot,s)\leq C_v,\quad s\in[c,d];
\end{split}\]
(3) for any $t,r,s\in I$, $V(t,s)=V(t,r)V(r,s)$;\\
(4) $V^{-1}(t,s)\in\mathscr{B}(\mathscr{Z})$ and $V^{-1}(t,s)=V(s,t)$.
\end{lemma}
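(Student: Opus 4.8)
\medskip
\noindent\emph{Proof strategy.} The plan is to read all four items off the defining relation $V(t,s)=Id+\int_s^t d[\Lambda(r)]V(r,s)$, the uniqueness statement for \eqref{Z2} recorded above, and the elementary estimates for the Kurzweil (Perron--Stieltjes) integral, establishing them in the order: (1), then the norm bound and the bounded variation of $s'\mapsto V(s',s)$ in (2), then the cocycle identity (3), then the invertibility (4), and finally the bounded variation of $s'\mapsto V(t,s')$ in (2). Item (1) is immediate: putting $t=s$ in the defining relation makes the integral range over a degenerate interval, so $V(s,s)=Id$.

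For the first part of (2), fix $[c,d]\subset I$ and $s\in[c,d]$ and set $h(r)=\mathrm{var}_c^r\Lambda$, a nondecreasing function with $h(d)=\mathrm{var}_c^d\Lambda<\infty$ by (A1). The standard estimate $\bigl\|\int_a^b d[\Lambda(r)]f(r)\bigr\|\le\int_a^b\|f(r)\|\,dh(r)$ applied to the defining relation gives $\|V(t,s)\|\le 1+\int_s^t\|V(r,s)\|\,dh(r)$ for $t\ge s$, and symmetrically for $t\le s$; applying the Gr\"onwall inequality for Kurzweil--Stieltjes integrals (see \cite{S-BOOK1992}) then yields $\|V(t,s)\|\le\exp(\mathrm{var}_c^d\Lambda)$ for all $t,s\in[c,d]$. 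Substituting this back into $V(t_j,s)-V(t_{j-1},s)=\int_{t_{j-1}}^{t_j} d[\Lambda(r)]V(r,s)$ and summing over a division of $[c,d]$ gives $\mathrm{var}_c^d V(\cdot,s)\le\exp(\mathrm{var}_c^d\Lambda)\cdot\mathrm{var}_c^d\Lambda$. Taking $C_v$ to be the larger of these two quantities settles this part.

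For (3), fix $r,s\in I$ and consider $W(t):=V(t,r)V(r,s)$; since $V(\cdot,r)$ is locally of bounded variation and right-multiplication by the fixed operator $V(r,s)$ preserves this, $W$ is again locally of bounded variation. Multiplying the defining relation for $V(\cdot,r)$ on the right by $V(r,s)$ and using linearity of the integral in the integrand gives $W(t)=V(r,s)+\int_r^t d[\Lambda(u)]W(u)$, while additivity of the Kurzweil integral over adjacent intervals gives $V(t,s)=V(r,s)+\int_r^t d[\Lambda(u)]V(u,s)$. Hence, for each fixed $\xi\in\mathscr{Z}$, $W(\cdot)\xi$ and $V(\cdot,s)\xi$ are solutions of \eqref{Z2} with the common value $V(r,s)\xi$ at $t=r$, so uniqueness forces $W\equiv V(\cdot,s)$; this is (3). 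Item (4) then follows: $V(s,t)\in\mathscr{B}(\mathscr{Z})$ by the codomain of $V$, and (3) together with (1) gives $V(t,s)V(s,t)=V(t,t)=Id$ and $V(s,t)V(t,s)=V(s,s)=Id$, so $V^{-1}(t,s)=V(s,t)$. Finally, for the remaining part of (2), write $V(t,s')=V(t,c)V(c,s')=V(t,c)\,V(s',c)^{-1}$ by (3) and (4); since $\|V(a,b)\|\le C_v$ for all $a,b\in[c,d]$, hence also $\|V(a,b)^{-1}\|=\|V(b,a)\|\le C_v$, the Lipschitz bound $\|A^{-1}-B^{-1}\|\le\|A^{-1}\|\,\|B^{-1}\|\,\|A-B\|$ yields $\mathrm{var}_c^d\bigl(V(c,\cdot)\bigr)\le C_v^2\,\mathrm{var}_c^d V(\cdot,c)\le C_v^3$, whence $\mathrm{var}_c^d V(t,\cdot)\le\|V(t,c)\|\,\mathrm{var}_c^d V(c,\cdot)\le C_v^4$; enlarging $C_v$ once more completes (2).

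The only step requiring real care is the Gr\"onwall estimate inside (2): in the Kurzweil--Stieltjes framework the integrator $h$ may have jumps, so one must use the form of Gr\"onwall's inequality valid for nondecreasing (or bounded variation) integrators, and assumption (A2) is precisely what keeps the contribution of the jumps of $\Lambda$ under control; the rest is formal manipulation of the integral equation combined with the uniqueness of solutions already available.
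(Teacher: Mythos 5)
The paper itself gives no proof of this lemma: it is quoted verbatim from Collegari--Federson--Frasson (\cite{CFF-CMJ}, Theorem 4.4), so there is no in-paper argument to compare against. Your reconstruction is correct and follows the standard route one would find in that reference: read (1) off the degenerate interval, get the uniform bound via the estimate $\|\int d[\Lambda]f\|\le\int\|f\|\,dh$ with $h=\mathrm{var}_c^{\cdot}\Lambda$ plus the Kurzweil--Stieltjes Gr\"onwall inequality, obtain $\mathrm{var}_c^d V(\cdot,s)$ by summing the defining relation over a division, derive the cocycle identity (3) from uniqueness for \eqref{Z2} applied to $W(\cdot)\xi$ and $V(\cdot,s)\xi$, deduce (4) from (1) and (3), and transfer the variation to the second variable through $V(t,s')=V(t,c)V(s',c)^{-1}$ and the Lipschitz estimate for operator inversion. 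Two small points worth tightening. First, your closing remark mislocates the role of (A2): the Gr\"onwall bound needs only that $h$ is nondecreasing (with a minor caveat that $\mathrm{var}_c^{\cdot}\Lambda$ need not be left-continuous, so one should invoke a version of the inequality valid for general nondecreasing integrators); what (A2) actually buys is the solvability of the integral equation \emph{backward} through the jumps of $\Lambda$, i.e.\ the very existence and uniqueness of $V(t,s)$ for $t<s$, which your steps (3) and (4) use silently whenever $t,r,s$ are not ordered. Second, in (3) you should note explicitly that pulling the constant operator $V(r,s)$ (and later the vector $\xi$) inside the Perron--Stieltjes integral is justified at the level of the Riemann--Stieltjes sums defining it; this is routine but it is the only place where the formal manipulation needs a one-line justification.
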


\begin{definition}(Exponential dichotomy   \cite{BFS-JDE2018})
    We say that  linear generalized ODEs \eqref{LGODE} admit an exponential dichotomy on $I$, if there exist a   projection $P:I \to \mathscr{Z}$ and constants $K,\alpha>0$
satisfying
\begin{equation}\label{ED}
  \begin{cases}
   \| \mathscr{V}(t)(Id-P(t))\mathscr{V}^{-1}(s)\|\leq Ke^{\alpha(t-s)} \quad \mathrm{for} \; t<  s,\\
    \|\mathscr{V}(t)P(t)\mathscr{V}^{-1}(s)\|\leq Ke^{-\alpha(t-s)} \quad \mathrm{for} \; t\geq  s,
  \end{cases}
\end{equation}
where $\mathscr{V}(t):=V(t,0)$ and $\mathscr{V}^{-1}(t):=V(0,t)$.
\end{definition}

Next, we   define the stable and unstable spaces at time $s$ by
\[
\mathscr{E}^s(s)=P(s)\mathscr{Z} \quad \mathrm{and} \quad \mathscr{E}^u (s)=(Id-P(s))\mathscr{Z}.
\]
  In order to understand this concept,  we give  an example  related on exponential dichotomy for the generalized ODEs.\\
{\bf Example.} Consider a linear   generalized ODE
\begin{equation}\label{Exam-1}
  \frac{dz}{d\tau}=D[a(t)z],
\end{equation}
where $z\in\mathbb{R}$, $t\in I$ and
\[
a(t):=
\begin{cases}
 0, \quad t\in J:=\{t_i\}_{i=-n}^{n}\subset I, n\in\mathbb{N}, \\
 t,\quad t\in I\backslash J.
\end{cases}
\]
It is observed that the function $a(t)$ in this context is a scalar function with a countable number of discontinuities.
It is evident that assumptions (A1) and (A2) are satisfied. Thus, we conclude that Eq. \eqref{Exam-1} exhibits an exponential expansion, representing a specific instance of exponential dichotomy.
In fact, Lemma \ref{ppp-lemma} establishes that
\[
\|\mathscr{V}(t)\|=\|V(t,0)\|\leq 1+\left\|\int_0^t V(r,0)d[a(r)]\right\|, \quad t\geq 0.
\]
By using the Gronwall-type inequality (\cite{BFS-JDDE2020}, Corollary 1.43), one has
\[
\|\mathscr{V}(t)\| \leq e^{var_0^t a}\leq e^t.
\]
Letting $P(t)=O$, then
\[
 \| \mathscr{V}(t)(Id-P(t))\mathscr{V}^{-1}(s)\|\leq e^{t-s}, \quad \forall t\leq s, \; t,s\in I,
\]
which indicates that Eq. \eqref{Exam-1} has an expansion. Similarly, we can construct an example such that $\frac{dz}{d\tau}=D[a(t)z]$ admits a contraction as well.

\subsection{The perturbation theory of the linear generalized ODEs}
\begin{lemma}\label{changshu}(\cite{CFF-CMJ}, Theorem 4.10)
  Assume that  (A1) and (A2) hold. 
  If $F:\mathscr{Z} \times [c,d]\to \mathscr{Z}$ is  Kurzweil integrable,
  $[\tilde{c},\tilde{d}]\subseteq [c,d]$
and $t_0\in [\tilde{c},\tilde{d}]$, then the generalized ODEs 
\begin{equation*}
  \begin{cases}
   \frac{dz}{d\tau}=D[\Lambda(t)z+F(z,t)],\\
   z(t_0)=z_0,
  \end{cases}
\end{equation*}
are equivalent to the following integral equations
\[
z(t)=V(t,t_0)z_0+\int_{t_0}^tDF(z(\tau),\gamma)-\int_{t_0}^t d_{\sigma}[V(t,\sigma)]\left(\int_{t_0}^{\sigma}DF(z(\tau),\gamma) \right).
\]
\end{lemma}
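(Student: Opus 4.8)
The plan is to reduce the assertion to the classical Stieltjes variation-of-constants computation and then transcribe it into the Kurzweil integral framework. Write $\Phi(t):=\int_{t_0}^t DF(z(\tau),\gamma)$; since $F(z(\cdot),\cdot)$ is Kurzweil integrable, once the (putative) solution $z$ is regarded as fixed on the right-hand side, $\Phi$ is a well-defined regulated function on $[\tilde c,\tilde d]$ with $\Phi(t_0)=0$. By linearity of the Kurzweil integral, $z$ solves the initial value problem in the statement if and only if
\begin{equation*}
z(t)=z_0+\int_{t_0}^t d[\Lambda(s)]z(s)+\Phi(t),\qquad t\in[\tilde c,\tilde d],
\end{equation*}
that is, $z$ is a solution of the linear \emph{nonhomogeneous} generalized ODE with inhomogeneity $\Phi$. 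Hence it suffices to establish the equivalence of this identity with $z(t)=V(t,t_0)z_0+\Phi(t)-\int_{t_0}^t d_\sigma[V(t,\sigma)]\Phi(\sigma)$, and the lemma then follows by substituting back $\Phi(t)=\int_{t_0}^t DF(z(\tau),\gamma)$.

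For the direct implication I would pass to $y(t):=V(t_0,t)z(t)=V^{-1}(t,t_0)z(t)$, which is legitimate since $V^{-1}(t,t_0)\in\mathscr B(\mathscr Z)$ and $V(t_0,\cdot)$ is locally of bounded variation (Lemma~\ref{ppp-lemma}). Computing the Kurzweil--Stieltjes differential of the product $V(t_0,\cdot)\,z(\cdot)$, inserting $dz=d[\Lambda]z+d\Phi$ and using the defining equation $V(t,s)=Id+\int_s^t d[\Lambda(r)]V(r,s)$ together with the analogous backward equation satisfied by $V(t_0,\cdot)$ (available under (A1)--(A2)), the $\Lambda$-contributions cancel and one is left with $y(t)=z_0+\int_{t_0}^t V(t_0,s)\,d\Phi(s)$. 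Multiplying on the left by $V(t,t_0)$ and invoking the cocycle identity $V(t,t_0)V(t_0,s)=V(t,s)$ gives $z(t)=V(t,t_0)z_0+\int_{t_0}^t V(t,s)\,d\Phi(s)$; a Kurzweil--Stieltjes integration by parts applied to $\int_{t_0}^t V(t,s)\,d\Phi(s)$, with $V(t,t)=Id$ and $\Phi(t_0)=0$, rewrites this integral as $\Phi(t)-\int_{t_0}^t d_\sigma[V(t,\sigma)]\Phi(\sigma)$, which is exactly the claimed formula. The converse implication is obtained by running the same chain of identities backwards: starting from the explicit formula one substitutes it into $z_0+\int_{t_0}^t d[\Lambda(s)]z(s)+\Phi(t)$ and, using a Fubini-type interchange for the iterated integral $\int_{t_0}^t d_\sigma[V(t,\sigma)]\int_{t_0}^\sigma d\Phi$ together with the properties of $V$, checks that the outcome is again $z(t)$.

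The genuine work lies in the Stieltjes bookkeeping rather than in the (essentially algebraic) cancellations. First, one must verify that all the integrals written above exist as Kurzweil(--Stieltjes) integrals, and that $z$ given by the explicit formula is regulated so that $\int_{t_0}^t d[\Lambda(s)]z(s)$ makes sense; this relies on $\Phi$ being regulated, on $V(t,\cdot)$ and $V(\cdot,s)$ being of local bounded variation (Lemma~\ref{ppp-lemma} and the properties of $V$ recalled above), and on the standard existence theorems for the integral of a regulated integrand against a function of bounded variation. Second --- and this is the main obstacle --- the Kurzweil--Stieltjes integration-by-parts and product formulas carry jump corrections built from products of the one-sided jumps $\Delta^\pm_\sigma V(t,\sigma)$ and $\Delta^\pm\Phi(\sigma)$, and one has to show that these corrections cancel or are absorbed; this is precisely where (A1)--(A2) enter, through the identities that link the jumps $\Delta^\pm_\sigma V(t,\sigma)$ to the jumps $\Lambda(\sigma^+)-\Lambda(\sigma)$ and $\Lambda(\sigma)-\Lambda(\sigma^-)$ of $\Lambda$. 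Finally, the Fubini-type interchange used in the converse direction must itself be justified within the regulated/bounded-variation setting. Once these three technical points are in place, both implications close and the stated equivalence, in particular the displayed formula of Lemma~\ref{changshu}, follows.
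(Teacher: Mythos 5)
First, a point of comparison: the paper itself contains no proof of Lemma~\ref{changshu} --- it is quoted as Theorem~4.10 of \cite{CFF-CMJ} --- so the relevant benchmark is the variation-of-constants argument in that reference. Your outline follows essentially that route: freeze $\Phi(t)=\int_{t_0}^t DF(z(\tau),\gamma)$ as an inhomogeneity, reduce to the nonhomogeneous linear generalized ODE, and derive/invert the formula through the fundamental operator $V$. The strategy is sound and the reduction itself (equivalence of the IVP with $z(t)=z_0+\int_{t_0}^t d[\Lambda(s)]z(s)+\Phi(t)$, using additivity of the Kurzweil integral once one of the two summand integrals exists) is fine.

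The gap is that the three ``technical points'' you defer are not peripheral bookkeeping: in the Kurzweil--Stieltjes setting they \emph{are} the proof. Concretely: (i) the product/integration-by-parts step for $V(t_0,\cdot)z(\cdot)$ produces jump corrections built from $\Delta^{\pm}_\sigma V(t_0,\sigma)$ and $\Delta^{\pm}z(\sigma)$, and their cancellation is not automatic --- it requires the jump identities $V(t_0,\sigma^+)=V(t_0,\sigma)\,(Id+\Delta^+\Lambda(\sigma))^{-1}$ and $\Delta^+z(\sigma)=\Delta^+\Lambda(\sigma)z(\sigma)+\Delta^+\Phi(\sigma)$ (and their left-sided analogues), none of which you state, let alone verify; (ii) the ``backward'' integral equation for $V$ in its second variable, $V(t,s)=Id+\int_s^t V(t,r)\,d[\Lambda(r)]$, which your cancellation hinges on, is itself a theorem needing (A2) and is invoked without justification; (iii) the converse implication rests on an unsymmetric Fubini-type theorem for iterated Perron--Stieltjes integrals, which you only name. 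These are exactly the ingredients (substitution theorem, jump relations for the fundamental operator, unsymmetric Fubini) that occupy the proof in \cite{CFF-CMJ} and in Schwabik \cite{S-BOOK1992}; until they are carried out, what you have is a correct plan rather than a proof. A smaller omission: the existence of $\int_{t_0}^t d[\Lambda(s)]z(s)$ for the function $z$ defined by the explicit formula (regulatedness of $z$, via $\Phi$ regulated and $V(t,\cdot)$ of bounded variation) should be argued explicitly before the splitting and the back-substitution are performed.
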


We then define  a special class of functions $F:\mathscr{Z} \times I\to \mathscr{Z}$. For convenience, we write $\Omega:=\mathscr{Z} \times I$.
\begin{definition}\label{defclassF}
  Given a function $h:I \to \mathbb{R}$ is nondecreasing.
  We say that $F:\Omega\to \mathscr{Z}$ belongs to the class $\mathscr{F}(\Omega,h)$ if
\begin{equation}\label{HHH1}
  \|F(z,t_2)-F(z,t_1)\|\leq |h(t_2)-h(t_1)|
\end{equation}
for any $(z,t_2)$, $(z,t_1)\in\Omega$ and
\begin{equation}\label{HHH2}
  \|F(z,t_2)-F(z,t_1)-F(w,t_2)+F(w,t_1)\|\leq \|z-w\||h(t_2)-h(t_1)|
\end{equation}
for any $(z,t_2)$, $(z,t_1)$,  $(w,t_2)$ $(w,t_2)\in\Omega$.
\end{definition}

\begin{lemma}\cite{BFS-JDE2018}
Suppose that the linear generalized ODEs \eqref{LGODE} satisfy (A1)--(A2) and admit an
exponential dichotomy.   If $g\in \mathscr{G}(\mathbb{R},\mathscr{Z})$ and the Perron-Stieltjes integrals
 \begin{equation}\label{V3}
   \int_{-\infty}^t d_{\sigma}[\mathscr{V}(t)P(t)\mathscr{V}^{-1}(\sigma)](g(\sigma)-g(0))
 \end{equation}
and
 \begin{equation}\label{V4}
   \int_{t}^\infty d_{\sigma}[\mathscr{V}(t)(Id-P(t))\mathscr{V}^{-1}(\sigma)](g(\sigma)-g(0))
 \end{equation}
exist for all $t\in\mathbb{R}$ and the maps \eqref{V3} and \eqref{V4} are bounded, then
\begin{equation*}
  \frac{dz}{d\tau}=D[\Lambda(t)z+g(t)]
\end{equation*}
 has a unique bounded solution.
\end{lemma}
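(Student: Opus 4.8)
The plan is to produce the bounded solution by an explicit Green's-function (Lyapunov--Perron) formula built from the dichotomy projections, to verify directly that this formula satisfies the variation-of-constants equation of Lemma~\ref{changshu}, and then to settle uniqueness by a soft argument on the homogeneous equation \eqref{LGODE}. For the candidate, note that since $F(z,t):=g(t)$ is independent of $z$ one has $\int_a^b DF(z(\tau),\gamma)=\int_a^b Dg(\gamma)=g(b)-g(a)$ (the telescoping computation of Definition~\ref{KW}); mimicking the Lyapunov--Perron equation displayed in the Introduction and sending the base point to $-\infty$, I would take
\[
z^{*}(t):=\big(g(t)-g(0)\big)-\int_{-\infty}^{t}d_{\sigma}\big[\mathscr{V}(t)P(t)\mathscr{V}^{-1}(\sigma)\big]\big(g(\sigma)-g(0)\big)+\int_{t}^{\infty}d_{\sigma}\big[\mathscr{V}(t)(Id-P(t))\mathscr{V}^{-1}(\sigma)\big]\big(g(\sigma)-g(0)\big),
\]
the precise signs being those dictated by Step~2 below. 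By hypothesis the two improper Perron--Stieltjes integrals exist for every $t\in\mathbb{R}$ and are bounded in $t$; since $g\in\mathscr{G}(\mathbb{R},\mathscr{Z})$ is bounded and regulated and $\sigma\mapsto\mathscr{V}(t)P(t)\mathscr{V}^{-1}(\sigma)$ is locally of bounded variation, one checks in a routine way that $z^{*}$ is bounded and regulated, i.e. $z^{*}\in\mathscr{G}(\mathbb{R},\mathscr{Z})$.

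\emph{That $z^{*}$ is a solution.} By Lemma~\ref{changshu} with $F(z,t)=g(t)$, it suffices to verify that for every $t_0$ and every $t$
\[
z^{*}(t)=V(t,t_0)z^{*}(t_0)+\big(g(t)-g(t_0)\big)-\int_{t_0}^{t}d_{\sigma}[V(t,\sigma)]\big(g(\sigma)-g(t_0)\big).
\]
Substituting the formula for $z^{*}(t_0)$ into the right-hand side, I would: (i) pull the bounded operator $V(t,t_0)$ through the Perron--Stieltjes integrals and use the composition property $V(t,t_0)V(t_0,\sigma)=V(t,\sigma)$ together with the invariance of the dichotomy projection to rewrite $V(t,t_0)\,[\mathscr{V}(t_0)P(t_0)\mathscr{V}^{-1}(\sigma)]=\mathscr{V}(t)P(t)\mathscr{V}^{-1}(\sigma)$ and likewise for $Id-P$; (ii) use additivity of the Kurzweil integral over subintervals to split $\int_{-\infty}^{t}=\int_{-\infty}^{t_0}+\int_{t_0}^{t}$ and $\int_{t_0}^{\infty}=\int_{t_0}^{t}+\int_{t}^{\infty}$; (iii) pass from the base point $g(t_0)$ to $g(0)$ in the last integral by means of $\int_{t_0}^{t}d_{\sigma}[V(t,\sigma)]\,c=(Id-V(t,t_0))c$ for a constant $c$, after which the finitely many operator-valued constant terms (all proportional to $g(0)-g(t_0)$, coming from $g(t)-g(t_0)$, from $V(t,t_0)(g(t_0)-g(0))$ and from the base change) cancel; and (iv) combine $\int_{t_0}^{t}d_{\sigma}[\mathscr{V}(t)P(t)\mathscr{V}^{-1}(\sigma)](g(\sigma)-g(0))+\int_{t_0}^{t}d_{\sigma}[\mathscr{V}(t)(Id-P(t))\mathscr{V}^{-1}(\sigma)](g(\sigma)-g(0))=\int_{t_0}^{t}d_{\sigma}[V(t,\sigma)](g(\sigma)-g(0))$ using $P(t)+(Id-P(t))=Id$, which cancels the remaining variation-of-constants integral. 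Hence the identity holds for all $t_0,t$, and by Lemma~\ref{changshu} $z^{*}$ is a bounded solution of $\frac{dz}{d\tau}=D[\Lambda(t)z+g(t)]$ on $\mathbb{R}$.

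\emph{Uniqueness.} If $z_1,z_2$ are bounded solutions, then $w:=z_1-z_2$ is a bounded solution of \eqref{LGODE}, so $w(t)=V(t,s)w(s)$ for all $s,t$. Decomposing $w(s)=P(s)w(s)+(Id-P(s))w(s)$ and using the invariance of $P$, the stable component satisfies $\|P(t)w(t)\|\le Ke^{-\alpha(t-s)}\sup_{r\in\mathbb{R}}\|w(r)\|$ for $s\le t$; letting $s\to-\infty$ gives $P(t)w(t)=0$. Symmetrically, the unstable component satisfies $\|(Id-P(t))w(t)\|\le Ke^{\alpha(t-s)}\sup_{r\in\mathbb{R}}\|w(r)\|$ for $s\ge t$; letting $s\to+\infty$ gives $(Id-P(t))w(t)=0$. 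Hence $w\equiv0$, i.e. $z_1=z_2$.

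The main obstacle is Step~2. First, one must identify the correct candidate $z^{*}$ — in particular recognise that the extra summand $g(t)-g(0)$ has to be present, which is precisely the term flagged in the Introduction as absent from the variation-of-constants formulas of ODEs and IDEs. Second, one must carry out the splitting and recombination of the \emph{improper} Perron--Stieltjes integrals rigorously: every truncated integral must be shown to converge (this is exactly what the hypothesis provides), additivity and linearity of the Kurzweil integral must be applied on the relevant intervals, and the bookkeeping of the operator-valued constant terms generated by the change of base point from $g(t_0)$ to $g(0)$ must be done carefully, including the mirror-image case $t_0\ge t$.
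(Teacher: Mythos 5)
This lemma is stated in the paper without proof (it is quoted verbatim from \cite{BFS-JDE2018}), so there is no internal argument to compare against; your reconstruction is correct and follows the same route as the cited source: the explicit Green-kernel candidate $z^{*}(t)=g(t)-g(0)-\int_{-\infty}^{t}d_{\sigma}[\mathscr{V}(t)P(t)\mathscr{V}^{-1}(\sigma)](g(\sigma)-g(0))+\int_{t}^{\infty}d_{\sigma}[\mathscr{V}(t)(Id-P(t))\mathscr{V}^{-1}(\sigma)](g(\sigma)-g(0))$, verification through the variation-of-constants formula of Lemma~\ref{changshu} using $\int_{a}^{b}Dg(\gamma)=g(b)-g(a)$ and $\int_{t_0}^{t}d_{\sigma}[V(t,\sigma)]c=(Id-V(t,t_0))c$, and uniqueness by applying the dichotomy estimates to a bounded solution of the homogeneous equation \eqref{LGODE} and letting $s\to\mp\infty$. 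The signs you deferred to Step~2 are indeed the right ones (the cancellations in (i)--(iv) close up exactly, with the term $g(t)-g(0)$ absorbing $g(t)-g(t_0)$ plus the constant produced by the base-point change), the only tacit ingredient being the invariance relation $P(t)\mathscr{V}(t,\sigma)=\mathscr{V}(t,\sigma)P(\sigma)$, which the paper's notation for the dichotomy kernels already presupposes.
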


\begin{remark}
According to \cite[Remark 4.11]{BFS-JDE2018},
if $g$ is bounded with $\|g(t)\|\leq M$
and the condition (A2) holds, then Bonotto et al. \cite{BFS-JDE2018} proved the existence of integral equations \eqref{V3}
and \eqref{V4}. In addition,  they obtained that
\[
\sup_{t\in\mathbb{R}}\left\|  \int_{-\infty}^t d_{\sigma}[\mathscr{V}(t)P(t)\mathscr{V}^{-1}(\sigma)](g(\sigma)-g(0))\right\| \leq 2MK^2C^3 e^{3CV_\Lambda}V_\Lambda^2
\]
and
\[
\sup_{t\in\mathbb{R}}\left\| \int_{t}^\infty d_{\sigma}[\mathscr{V}(t)(Id-P(t))\mathscr{V}^{-1}(\sigma)](g(\sigma)-g(0))\right\| \leq 2MK(1+K)C^3 e^{3CV_\Lambda}V_\Lambda^2.
\]
\end{remark}

\section{Stable manifold for the generalized ODEs}

\subsection{Main result}
In this paper, we investigate the nonlinear generalized ODEs given by
\begin{equation}\label{NL}
\frac{dz}{d\tau}=D[\Lambda(t)z+F(z,t)],
\end{equation}
where $F: \mathscr{Z}\times \mathbb{R}\to \mathscr{Z}$ is Kurzweil integrable satisfying $F(0,t)=0$.
In the following, we make the  assumptions on $\Lambda$ and $F$:
\\
(H1) there exists a positive constant $C_a>0$ such that
$$
\|[Id+(\Lambda(t^+)-\Lambda(t))]^{-1}\|\leq C_a, \quad  \|[Id+(\Lambda(t)-\Lambda(t^-))]^{-1}\|\leq C_a,
$$
and
$$
V_\Lambda:=\sup{\mathrm{var}_c^d \Lambda: c,d\in\mathbb{R}, c<d}<\infty;
$$
\\
(H2) the function $F\in\mathscr{F}(\Omega,h)$, where $h:\mathbb{R}\to \mathbb{R}$ is a nondecreasing function such that
$$
V_h:=\sup\{\mathrm{var}_c^d h: c,d\in\mathbb{R}, c<d\}<\infty.
$$

Our goal is to establish the existence of a stable invariant manifold for the nonlinear generalized ODEs \eqref{NL}.
We first present the definition of the stable manifold of generalized ODEs .
Let $\mathscr{E}^s(s)$ and $\mathscr{E}^u(s)$  be  the stable and unstable spaces at time $s$, respectively.
  Given a constant $C >0$, let $\mathscr{X}$ be the set of all maps
\[
m:\{(s,\zeta),s\in\mathbb{R},\zeta\in \mathscr{E}^s(s)\}\to \mathscr{Z}
\]
satisfying the  properties:
(i) for any $s\in\mathbb{R}$, $m(s,0)=0$;
(ii) for any $s\in\mathbb{R}$ and $\zeta\in \mathscr{E}^s(s)$, $m(s,\zeta)\in \mathscr{E}^u (s)$;
(iii) for any $s\in\mathbb{R}$ and $\zeta,\widetilde{\zeta}\in \mathscr{E}^s(s)$,
\begin{equation}\label{LIP}
  \|m(s,\zeta)-m(s,\widetilde{\zeta})\|\leq C \, \|\zeta-\widetilde{\zeta}\|.
\end{equation}
Given  a $m\in\mathscr{X}$. Then the stable manifold  of \eqref{NL} is defined by
\[
 \mathscr{M}_{gra}:=\{(s,\zeta,m(s,\zeta)):s\in\mathbb{R},\zeta\in \mathscr{E}^s(s)\}.
\]

Now we are in a position to state our main result in this paper.

\begin{theorem}\label{theorem1}
Suppose that the linear generalized ODEs \eqref{LGODE} admit  an exponential dichotomy.
If conditions (H1) and (H2) hold, and the Lpschitz-type constant $V_h$ in (H2) is sufficiently small,
then Eq. \eqref{NL} has a stable manifold $\mathscr{M}_{gra}$. Furthermore,  $\mathscr{M}_{gra}$ has the following properties:
\begin{description}
  \item[(i)]  for all $t\in\mathbb{R}$,  any solution $z(t)$ of Eq. \eqref{NL} with  $(s,z(s))\in \mathscr{M}_{gra}$ satisfies
$(t,z(t))\in \mathscr{M}_{gra}$,
  \item[(ii)]   any solution $z(t)$ of Eq. \eqref{NL} with  $(s,z(s))\not\in \mathscr{M}_{gra}$ is unbounded on $[s,\infty)$.
\end{description}
\end{theorem}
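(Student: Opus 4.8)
The plan is to run the Lyapunov--Perron method adapted to the Kurzweil integral setting; throughout, $\mathscr{V}(t,s)=V(t,s)$ denotes the fundamental operator of \eqref{LGODE}, and the exponential dichotomy \eqref{ED}, combined with the invariances $\mathscr{V}(t,s)P(s)=P(t)\mathscr{V}(t,s)$, gives $\|\mathscr{V}(t,s)P(s)\|\le Ke^{-\alpha(t-s)}$ for $t\ge s$ and $\|\mathscr{V}(t,s)(Id-P(s))\|\le Ke^{-\alpha(s-t)}$ for $t\le s$. \emph{First}, I would establish the generalized Lyapunov--Perron equation (Lemma~\ref{lemma1}): starting from the variation-of-constants formula of Lemma~\ref{changshu} on a finite interval $[s,T]$, split $z(s)=P(s)z(s)+(Id-P(s))z(s)$, use the dichotomy to separate the stable and unstable evolutions, and let $T\to\infty$; since $z$ is bounded on $[s,\infty)$ and $\|\mathscr{V}(t,T)(Id-P(T))\|\le Ke^{-\alpha(T-t)}\to0$, the unstable contribution of the data at $T$ vanishes and one is left with the term $\int_t^\infty d_\sigma[\mathscr{V}(t,\sigma)(Id-P(\sigma))]\int_s^\sigma DF(z(\tau),\gamma)$. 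The feature absent from the classical ODE/IDE case is that the term $\int_s^t DF(z(\tau),\gamma)$ produced by the generalized variation-of-constants formula does not cancel; it stays in the equation, but it is harmless in the sup norm since $\|\int_s^t DF(z(\tau),\gamma)\|\le\|z\|_\infty\,(h(t)-h(s))\le V_h\|z\|_\infty$, using $F(0,\cdot)=0$ and $F\in\mathscr{F}(\Omega,h)$. The converse (``inverse result'') part of Lemma~\ref{lemma1} --- that a bounded regulated function satisfying this integral equation is a solution of \eqref{NL} --- follows by reversing the computation.

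\emph{Second}, for fixed $s\in\mathbb{R}$ and $\zeta\in\mathscr{E}^s(s)$, I would solve the Lyapunov--Perron equation by a contraction in the Banach space $X_s$ of bounded regulated functions $z:[s,\infty)\to\mathscr{Z}$ with norm $\|z\|_\infty:=\sup_{t\ge s}\|z(t)\|$, using the operator
\[
(\mathcal{T}_\zeta z)(t)=\mathscr{V}(t,s)\zeta+\int_s^t DF(z(\tau),\gamma)-\int_s^t d_\sigma[\mathscr{V}(t,\sigma)P(\sigma)]\Bigl(\int_s^\sigma DF(z(\tau),\gamma)\Bigr)+\int_t^\infty d_\sigma[\mathscr{V}(t,\sigma)(Id-P(\sigma))]\Bigl(\int_s^\sigma DF(z(\tau),\gamma)\Bigr).
\]
Using the Kurzweil estimate $\|\int_a^b D[F(z_1(\tau),\gamma)-F(z_2(\tau),\gamma)]\|\le\|z_1-z_2\|_\infty(h(b)-h(a))$, the dichotomy bounds, and the uniform bounded-variation estimates for $\sigma\mapsto\mathscr{V}(t,\sigma)P(\sigma)$ on $[s,t]$ and $\sigma\mapsto\mathscr{V}(t,\sigma)(Id-P(\sigma))$ on $[t,\infty)$ --- which follow from \eqref{ED} together with (H1), as in the estimates of the Remark in Section~2.3 --- one checks that $\mathcal{T}_\zeta$ maps $X_s$ into itself and that $\|\mathcal{T}_\zeta z_1-\mathcal{T}_\zeta z_2\|_\infty\le\Theta\,V_h\,\|z_1-z_2\|_\infty$, with $\Theta$ depending only on $K,\alpha,C_a,V_\Lambda$. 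Hence, when $V_h$ is small, $\mathcal{T}_\zeta$ is a contraction (Lemma~\ref{lemma3}) with a unique fixed point $z(\cdot;s,\zeta)\in X_s$, which by the inverse part of Lemma~\ref{lemma1} is a solution of \eqref{NL} with $P(s)z(s;s,\zeta)=\zeta$; applying the same estimate to the data term gives $\|z(\cdot;s,\zeta_1)-z(\cdot;s,\zeta_2)\|_\infty\le\frac{K}{1-\Theta V_h}\|\zeta_1-\zeta_2\|$. I then set $m(s,\zeta):=(Id-P(s))z(s;s,\zeta)$; evaluating the fixed-point equation at $t=s$ shows $m(s,\zeta)=\int_s^\infty d_\sigma[\mathscr{V}(s,\sigma)(Id-P(\sigma))]\int_s^\sigma DF(z(\tau;s,\zeta),\gamma)\in\mathscr{E}^u(s)$, that $m(s,0)=0$ (because $z(\cdot;s,0)\equiv0$, since $F(0,\cdot)=0$), and, combining with the Lipschitz bound just obtained, that $\|m(s,\zeta)-m(s,\widetilde\zeta)\|\le\Theta'V_h\|\zeta-\widetilde\zeta\|\le C\|\zeta-\widetilde\zeta\|$ for $V_h$ small. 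Thus $m\in\mathscr{X}$ and $\mathscr{M}_{gra}$ is well defined.

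\emph{Third}, invariance and the complementary statement follow from uniqueness. If $(s,z(s))\in\mathscr{M}_{gra}$, i.e.\ $(Id-P(s))z(s)=m(s,P(s)z(s))$, then $z$ coincides on $[s,\infty)$ with $z(\cdot;s,P(s)z(s))$, hence is bounded there; for any $t\ge s$ the restriction $z|_{[t,\infty)}$ is again a bounded solution of \eqref{NL}, so by Lemma~\ref{lemma1} it satisfies the Lyapunov--Perron equation with base point $t$, which evaluated at $t$ reads $z(t)=P(t)z(t)+m(t,P(t)z(t))$, i.e.\ $(t,z(t))\in\mathscr{M}_{gra}$; if $z$ is defined on all of $\mathbb{R}$, the same argument applies for $t<s$, since $z$ is bounded on the compact $[t,s]$ and on $[s,\infty)$. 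This proves (i). For (ii), if a solution $z$ with $(s,z(s))\notin\mathscr{M}_{gra}$ were bounded on $[s,\infty)$, then by Lemma~\ref{lemma1} it would satisfy the Lyapunov--Perron equation, hence be a fixed point of $\mathcal{T}_{P(s)z(s)}$, and by the uniqueness in Lemma~\ref{lemma3} it would equal $z(\cdot;s,P(s)z(s))$, forcing $(Id-P(s))z(s)=m(s,P(s)z(s))$, i.e.\ $(s,z(s))\in\mathscr{M}_{gra}$ --- a contradiction; hence $z$ is unbounded on $[s,\infty)$.

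The hard part is the \emph{first} step. Unlike the classical case, one must derive the Lyapunov--Perron equation while carrying the extra term $\int_s^t DF(z(\tau),\gamma)$ through every subsequent estimate, and one must control, uniformly over the infinite half-line, the Perron--Stieltjes integral $\int_t^\infty d_\sigma[\mathscr{V}(t,\sigma)(Id-P(\sigma))](\cdot)$ --- in particular proving the uniform bound $\sup_{t}\mathrm{var}_t^\infty[\mathscr{V}(t,\cdot)(Id-P(\cdot))]<\infty$ by coupling the exponential dichotomy \eqref{ED} with hypothesis (H1), so that the integral converges and the contraction estimate closes. Once that technical core is in place, the contraction, the construction of $m$, and the invariance and unboundedness statements are routine.
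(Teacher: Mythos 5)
Your proposal is correct and follows essentially the same route as the paper: derive the generalized Lyapunov--Perron equation (Lemma \ref{lemma1}) by splitting along $P$ and letting the unstable data at infinity vanish, solve it by a contraction in the sup norm over $[s,\infty)$ for small $V_h$ (Lemmas \ref{lemma2}--\ref{lemma3}), define $m(s,\zeta)$ as the unstable component of the fixed point, and obtain invariance and the unboundedness statement (ii) from uniqueness of bounded solutions of the fixed-point equation. The only cosmetic difference is that you bound $\int_s^t DF(z(\tau),\gamma)$ by $\|z\|_\infty\,|h(t)-h(s)|$ via \eqref{HHH2} with $w=0$, whereas the paper uses \eqref{HHH1} to get the cruder bound $2V_h$; both suffice.
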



Remark that in Theorem \ref{theorem1}, we establish the stable invariant manifold for sufficiently small perturbations of an exponential dichotomy, where property
(i) implies that the stable manifold of Eq. \eqref{NL} has invariance, and property  (ii) states that
 any solutions of Eq. \eqref{NL}  outside the stable manifold are unbounded.
We  will employ the Lyapunov-Perron method to obtain Theorem \ref{theorem1}, where the greatest difficulty is to establish the Lyapunov-Perron equation for
Eq. \eqref{NL} under the Kurzweil integral theory.
Therefore, we will overcome this in Lemma \ref{lemma1} and point out the differences between the Lyapunov-Perron equation under the Kurzweil integral
 and the classical Riemann integral in Remark \ref{rem111}.
On the other hand,  if the Kurzweil integral reduces to the Riemann integral, then our result is the classical result of stable manifolds
under ODEs  (see Hale \cite{Hale-book-1969}).
  Furthermore,  in the next section, we will also apply this result to the MDEs and the IDEs in the sense of Lebesgue (or Lebesgue-Stieltjes) integral.

\subsection{Auxiliary lemmas}

In order to prove Theorem \ref{theorem1}, in this section we present some auxiliary   lemmas.
For convenience, in what follows we write $\mathscr{V}(t,s)=\mathscr{V}(t)\mathscr{V}^{-1}(s)$ for all $t,s\in\mathbb{R}$.
We first give a generalized Lyapunov-Perron equation of Eq. \eqref{NL}, i.e., we have the following result.

\begin{lemma}\label{lemma1}
If  $z:[s,\infty)\to\mathscr{Z}$ is a bounded solution of Eq. \eqref{NL}, then for each $t\geq s$, we have
\begin{equation}\label{LPE}
\begin{split}
  z(t)=& \mathscr{V}(t,s)P(s)z(s)+\int_s^t DF(z(\tau),\gamma)-\int_{s}^t d_{\sigma}[\mathscr{V}(t,\sigma)P(\sigma)]\int_s^\sigma DF(z(\tau),\gamma) \\
  &+ \int_{t}^\infty d_{\sigma}[\mathscr{V}(t,\sigma)(Id-P(\sigma))] \int_s^\sigma DF(z(\tau),\gamma).
\end{split}
\end{equation}
Conversely, all  solutions of Eq. \eqref{LPE} on $[s,\infty)$ are solutions of Eq. \eqref{NL}.
\end{lemma}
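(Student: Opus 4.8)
The plan is to derive the generalized Lyapunov--Perron equation \eqref{LPE} by combining the variation-of-constants formula (Lemma \ref{changshu}) with the decomposition of solution space given by the exponential dichotomy, and then argue the converse by differentiating the integral relation in the sense of Kurzweil. Throughout I abbreviate $\mathcal{F}(\sigma):=\int_s^\sigma DF(z(\tau),\gamma)$, which is a well-defined bounded-variation function on $[s,\infty)$ since $F\in\mathscr{F}(\Omega,h)$ with $V_h<\infty$; this is the quantity on which all the Perron--Stieltjes integrals in \eqref{LPE} act.

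First I would apply Lemma \ref{changshu} on an arbitrary compact subinterval $[s,T]$ with initial time $s$: for $t\in[s,T]$,
\[
z(t)=\mathscr{V}(t,s)z(s)+\mathcal{F}(t)-\int_s^t d_\sigma[\mathscr{V}(t,\sigma)]\,\mathcal{F}(\sigma).
\]
Using the cocycle property $\mathscr{V}(t,\sigma)=\mathscr{V}(t)\mathscr{V}^{-1}(\sigma)$ and inserting $Id=P(\sigma)+(Id-P(\sigma))$ into the Stieltjes integrand, I split the last integral into a stable and an unstable part. The stable part, together with $\mathscr{V}(t,s)z(s)$, gets reorganized so that only $P(s)z(s)$ survives, using the invariance $\mathscr{V}(t,\sigma)P(\sigma)=P(t)\mathscr{V}(t,\sigma)$ of the projections under the flow; the unstable part is where the $\int_t^\infty$ tail comes from. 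Concretely, for the unstable piece I would write, for $t\le T$,
\[
-\int_s^t d_\sigma[\mathscr{V}(t,\sigma)(Id-P(\sigma))]\,\mathcal{F}(\sigma)
=\int_t^\infty d_\sigma[\mathscr{V}(t,\sigma)(Id-P(\sigma))]\,\mathcal{F}(\sigma)
-\int_s^\infty d_\sigma[\mathscr{V}(t,\sigma)(Id-P(\sigma))]\,\mathcal{F}(\sigma),
\]
and the key point is that the improper integral $\int_t^\infty$ converges: by the dichotomy estimate $\|\mathscr{V}(t,\sigma)(Id-P(\sigma))\|\le Ke^{\alpha(t-\sigma)}$ for $\sigma\ge t$ together with the boundedness of $z$ (hence of $\mathcal{F}$ restricted to any window) and the summable-variation bound coming from $V_h$, one obtains absolute convergence of the Perron--Stieltjes integral; here I would lean on the estimates quoted in the Remark after Lemma 2.20 and on integration by parts for the Perron--Stieltjes integral (Hönig's book). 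Letting $T\to\infty$ and checking that the "constant" term $\mathscr{V}(t,s)z(s)-\int_s^\infty d_\sigma[\mathscr{V}(t,\sigma)(Id-P(\sigma))]\mathcal{F}(\sigma)$ must equal $\mathscr{V}(t,s)P(s)z(s)$ — because the left-hand side $z(t)$ stays bounded as $t\to\infty$ while the unstable component of anything not proportional to $P(s)z(s)$ would blow up — pins down the formula \eqref{LPE}.

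For the converse, I would take any solution $z$ of \eqref{LPE} on $[s,\infty)$ and show it satisfies the variation-of-constants identity of Lemma \ref{changshu}, which by that lemma is equivalent to solving \eqref{NL}. The mechanism is to apply $\mathscr{V}(t_2,t_1)=\mathscr{V}(t_2)\mathscr{V}^{-1}(t_1)$ to \eqref{LPE} evaluated at $t_1$, subtract from \eqref{LPE} at $t_2$, and use the additivity of the Kurzweil integral $\int_s^{t_2}DF=\int_s^{t_1}DF+\int_{t_1}^{t_2}DF$ plus Stieltjes integration by parts to collapse the difference into exactly $\mathscr{V}(t_2,t_1)z(t_1)+\int_{t_1}^{t_2}DF(z(\tau),\gamma)-\int_{t_1}^{t_2}d_\sigma[\mathscr{V}(t_2,\sigma)]\int_{t_1}^\sigma DF(z(\tau),\gamma)$; the $\int_t^\infty$ tail terms at $t_1$ and $t_2$ recombine cleanly because $d_\sigma[\mathscr{V}(t_2,\sigma)(Id-P(\sigma))]=\mathscr{V}(t_2,t_1)\,d_\sigma[\mathscr{V}(t_1,\sigma)(Id-P(\sigma))]$.

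The main obstacle I anticipate is the careful manipulation of the term $\int_s^t DF(z(\tau),\gamma)$ that sits \emph{outside} any Stieltjes integral — as the authors themselves flag in the introduction, this is what distinguishes the generalized-ODE Lyapunov--Perron equation from the classical ODE/IDE case. In particular, when I move from the interval $[s,T]$ to $[s,\infty)$ I must track this bare term together with the $\mathcal{F}(\sigma)$'s hidden inside the two Stieltjes integrals and verify that the regrouping producing $-\int_s^t d_\sigma[\mathscr{V}(t,\sigma)P(\sigma)]\mathcal{F}(\sigma)+\int_t^\infty d_\sigma[\mathscr{V}(t,\sigma)(Id-P(\sigma))]\mathcal{F}(\sigma)$ is legitimate; this requires justifying term-by-term passage to the limit, which rests on the convergence of the improper Perron--Stieltjes integral and on the interchange of limit with the Kurzweil integral. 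Establishing these convergence facts rigorously — combining the exponential dichotomy bounds, the uniform bound on $z$, and the bounded-variation control from $V_h$ in (H2) — is where most of the real work lies; the rest is bookkeeping with the cocycle and projection-invariance identities.
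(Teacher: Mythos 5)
Your proposal is correct and follows essentially the same route as the paper: apply the variation-of-constants formula of Lemma \ref{changshu}, split along the dichotomy projections, use the boundedness of $z$, the bound $\bigl\|\int_s^t DF(z(\tau),\gamma)\bigr\|\le 2V_h$ and the Perron--Stieltjes estimates quoted from \cite{BFS-JDE2018} to get convergence of the improper integral, and conclude that $(Id-P(s))z(s)=\int_s^\infty d_\sigma[\mathscr{V}(s,\sigma)(Id-P(\sigma))]\int_s^\sigma DF(z(\tau),\gamma)$, with the converse being the same projection/recombination computation fed back into Lemma \ref{changshu}. The only immaterial differences are organizational: the paper pins down the unstable component by solving backwards for $(Id-P(s))z(s)$ and letting $t\to\infty$, whereas you let the exponential growth estimate force the unstable residual to vanish, and your converse is anchored at an arbitrary intermediate time $t_1$ rather than at $s$.
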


\begin{proof}
We firstly claim that integral \eqref{LPE} holds.
   Since $z(t)$ is a solution of \eqref{NL}, we derive from Lemma \ref{changshu}  that
\[
z(t)=\mathscr{V}(t,s)z(s)+\int_s^t DF(z(\tau),\gamma)- \int_{s}^t d_{\sigma}[\mathscr{V}(t,\sigma)]\int_s^\sigma DF(z(\tau),\gamma),
\]
for all $t\geq s$. We  split $z(t)$ into two parts: $P(t)z(t)$ and $(Id-P(t))z(t)$. Then
\begin{equation}\label{LPE1}
P(t)z(t)=P(t)\mathscr{V}(t,s)z(s)+\int_s^t P(t)DF(z(\tau),\gamma)- \int_{s}^t d_{\sigma}[P(t)\mathscr{V}(t,\sigma)]\int_s^\sigma DF(z(\tau),\gamma)
\end{equation}
and
\[\begin{split}
(Id-P(t))z(t)=& (Id-P(t))\mathscr{V}(t,s)z(s)+\int_s^t (Id-P(t))DF(z(\tau),\gamma)\\
&- \int_{s}^t d_{\sigma}[(Id-P(t))\mathscr{V}(t,\sigma)]\int_s^\sigma DF(z(\tau),\gamma).
\end{split}\]
It is obvious that
\begin{equation}\label{LP0}
\begin{split}
 (Id-P(s))z(s)=& \mathscr{V}(s,t)(Id-P(t))z(t)-\int_s^t \mathscr{V}(s,t)(Id-P(t))DF(z(\tau),\gamma) \\
 &+\int_s^t d_\sigma [\mathscr{V}(s,\sigma)(Id-P(\sigma))]\int_s^\sigma DF(z(\tau),\gamma).
\end{split}
\end{equation}
Since $z(t)$ is bounded on $[s,\infty)$, there is a positive constant $N>0$ satisfying $\|z(t)\|\leq N$.
From \eqref{ED}, it follows that
\begin{equation}\label{LP1}
  \| \mathscr{V}(s,t)(Id-P(t))z(t)\|\leq Ke^{-\alpha(t-s)}\|z(t)\|\leq NKe^{-\alpha(t-s)}.
\end{equation}
By the definition of Kurzweil  integral, for each $\delta>0$, there exists a gauge $\delta$ of $[s,t]$ such that for every $\delta$-fine tagged division $D=\{(\tau_j,[t_{j-1},t_j]), j=1,2,\cdots,|D| \}$ of $[s,t]$, we have
\[
\left\|\int_s^t DF(z(\tau),\gamma)\right\|=\left\|\sum_{j=1}^{|D|} \left[F(z(\tau_j),t_j)-F(z(\tau_j),t_{j-1})\right]\right\|,
\]
and by using condition (H2), we have
\begin{equation*}
 \left\|\sum_{j=1}^{|D|} \left[F(z(\tau_j),t_j)-F(z(\tau_j),t_{j-1})\right]\right\| \leq \left| \sum_{j=1}^{|D|} h(t_j)-h(t_{j-1})\right|\leq |h(t)-h(s)|\leq 2V_h.
\end{equation*}
Thus,
\begin{equation}\label{LP2}
  \left\| \int_s^t \mathscr{V}(s,t)(Id-P(t))DF(z(\tau),\gamma)\right\|\leq Ke^{-\alpha(t-s)}\left\|\int_s^tDF(z(\tau),\gamma)\right\|
  \leq 2V_h Ke^{-\alpha(t-s)}
\end{equation}
and
\begin{equation}\label{LP3}
  \left\|\int_s^t d_\sigma [\mathscr{V}(s,\sigma)(Id-P(\sigma))]\int_s^\sigma DF(z(\tau),\gamma)\right\|\leq
\left\|\int_s^t d_\sigma [(Id-P(s))\mathscr{V}(s,\sigma)]\right\|\cdot 2V_h.
\end{equation}
We deal with the above Perron-Stieltjes integral. Indeed,
\[
\left\|\int_s^t d_\sigma [(Id-P(s))\mathscr{V}(s,\sigma)]\right\|=\left\|\int_s^t d_\sigma [\mathscr{V}(s)(Id-P(s))\mathscr{V}^{-1}(\sigma)]\right\|.
\]
By using  \cite[Remark 4.11]{BFS-JDE2018}, we deduce that
\[
2\sup\limits_{t\geq s}\left\|\int_s^t d_\sigma [\mathscr{V}(s)(Id-P(s))\mathscr{V}^{-1}(\sigma)]\right\|V_h \leq 2V_hK(1+K)C_a^3e^{3C_aV_\Lambda}V_\Lambda^2<\infty.
\]
Set $t\to\infty$, we obtain form \eqref{LP0}, \eqref{LP1}, \eqref{LP2} and \eqref{LP3} that
\[
(Id-P(s))z(s)=\int_s^\infty d_\sigma [\mathscr{V}(s,\sigma)(Id-P(\sigma))]\int_s^\sigma DF(z(\tau),\gamma)
\]
and
\[
\mathscr{V}(t,s)(Id-P(s))z(s)=\int_s^\infty d_\sigma [\mathscr{V}(t,\sigma)(Id-P(\sigma))]\int_s^\sigma DF(z(\tau),\gamma).
\]
Therefore, we get
\[
(Id-P(t))z(t)=\int_s^t (Id-P(t))DF(z(\tau),\gamma)+\int_t^\infty d_\sigma [\mathscr{V}(t,\sigma)(Id-P(\sigma))]\int_s^\sigma DF(z(\tau),\gamma).
\]
Combining this equality with \eqref{LPE1}, we conclude that
\begin{equation*}
\begin{split}
  z(t)=& \mathscr{V}(t,s)P(s)z(s)+\int_s^t DF(z(\tau),\gamma)-\int_{s}^t d_{\sigma}[\mathscr{V}(t,\sigma)P(\sigma)]\int_s^\sigma DF(z(\tau),\gamma) \\
  &+ \int_{t}^\infty d_{\sigma}[\mathscr{V}(t,\sigma)(Id-P(\sigma))] \int_s^\sigma DF(z(\tau),\gamma).
\end{split}
\end{equation*}

Conversely,   assume that \eqref{LPE} holds for any $t\geq s$. Then, we can deduce that
\begin{small}
\[\begin{split}
\mathscr{V}&(t,s) z(s)+\int_s^t DF(z(\tau),\gamma)-\int_{s}^t d_{\sigma}[\mathscr{V}(t,\sigma)]\int_s^\sigma DF(z(\tau),\gamma)\\
=& P(t)\mathscr{V}(t,s)z(s)+\int_s^t P(t) DF(z(\tau),\gamma)-\int_{s}^t d_{\sigma}[P(t)\mathscr{V}(t,\sigma)]\int_s^\sigma DF(z(\tau),\gamma) \\
&+(Id-P(t)) \mathscr{V}(t,s)z(s)+\int_s^t (Id-P(t)) DF(z(\tau),\gamma)
-\int_{s}^t d_{\sigma}[(Id-P(t))\mathscr{V}(t,\sigma)]\int_s^\sigma DF(z(\tau),\gamma) \\
=&  \mathscr{V}(t,s)P(s)z(s)+\int_s^t  DF(z(\tau),\gamma)-\int_{s}^t d_{\sigma}[\mathscr{V}(t,\sigma)P(\sigma)]\int_s^\sigma DF(z(\tau),\gamma) \\
&+\int_s^\infty d_{\sigma}[(Id-P(t))\mathscr{V}(t,\sigma)]\int_s^\sigma DF(z(\tau),\gamma)-\int_{s}^t d_{\sigma}[(Id-P(t))\mathscr{V}(t,\sigma)]\int_s^\sigma DF(z(\tau),\gamma) \\
=& \mathscr{V}(t,s)P(s)z(s)+\int_s^t  DF(z(\tau),\gamma)-\int_{s}^t d_{\sigma}[\mathscr{V}(t,\sigma)P(\sigma)]\int_s^\sigma DF(z(\tau),\gamma) \\
&+\int_{t}^\infty d_{\sigma}[\mathscr{V}(t,\sigma)(Id-P(\sigma))] \int_s^\sigma DF(z(\tau),\gamma)\\
=& z(t),
\end{split}\]
\end{small}
for $t\geq s$. This proves that $z(t)$ is a solution of Eq. \eqref{NL}, and the proof is completed.
\end{proof}

\begin{remark}\label{rem111}
Under the classical ODEs, the above so-called generalized Lyapunov-Perron integral (see \eqref{LPE}) is simplified as
\[
  z(t)= \mathscr{V}(t,s)P(s)z(s)+\int_{s}^t \mathscr{V}(t,\sigma)P(\sigma)F(z(\sigma ),\sigma ) d\sigma
  -\int_{t}^\infty \mathscr{V}(t,\sigma)(Id-P(\sigma))F(z(\sigma ),\sigma ) d\sigma,
\]
where $F(0,t)=0$ and $F(z,t)$ is Lipschitzian in $z$ with a sufficiently small constant $L_F$.
We see that the term $\int_s^t DF(z(\tau),\gamma)$ disappears, since the Kurzweil integral reduces to the Riemann integral.
\end{remark}

\begin{lemma}\label{lemma2}
Assume that Eq. \eqref{LGODE} admits an exponential dichotomy. Define
\begin{equation}\label{Hf}
 (\mathscr{H}f)(t)= -\int_s^t d_{\sigma}[\mathscr{V}(t,\sigma)P(\sigma)] (f(\sigma)-f(s))
 +\int_{t}^\infty d_{\sigma}[\mathscr{V}(t,\sigma)(Id-P(\sigma))](f(\sigma)-f(s)),
\end{equation}
where $f(t)$ is a bounded function with $\|f(t)\|\leq N$ for any $t\geq s$ and $N>0$.
If   (H1) and (H2) hold, then $\mathscr{H}$ is a bounded linear operator.
\end{lemma}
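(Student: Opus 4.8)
The plan is to verify linearity directly from the linearity of the Perron--Stieltjes integral, and then to bound $\|(\mathscr{H}f)(t)\|$ uniformly in $t\ge s$ by a constant multiple of $N:=\sup_{t\ge s}\|f(t)\|$, reusing the quantitative Perron--Stieltjes estimates of \cite[Remark 4.11]{BFS-JDE2018} that already appeared in the proof of Lemma \ref{lemma1}.

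First I would dispose of linearity. The assignment $f\mapsto f(\cdot)-f(s)$ is linear, and for each fixed $t$ the maps $g\mapsto\int_s^t d_{\sigma}[\mathscr{V}(t,\sigma)P(\sigma)]\,g(\sigma)$ and $g\mapsto\int_t^\infty d_{\sigma}[\mathscr{V}(t,\sigma)(Id-P(\sigma))]\,g(\sigma)$ are linear in $g$ wherever they exist; hence $\mathscr{H}(\lambda f_1+\mu f_2)=\lambda\,\mathscr{H}f_1+\mu\,\mathscr{H}f_2$. Existence of the two integrals in \eqref{Hf} is not an issue here: since $\|f(\sigma)-f(s)\|\le 2N$ for all $\sigma\ge s$, the map $\sigma\mapsto f(\sigma)-f(s)$ is bounded, so by the Remark just recalled the stable integral over $[s,t]$ and the unstable improper integral over $[t,\infty)$ both exist for every $t\ge s$.

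Next I would establish boundedness by estimating the two terms of \eqref{Hf} separately, uniformly in $t\ge s$. For the stable term, the bounded-variation properties of the fundamental operator (\cite[Theorem 4.4]{CFF-CMJ}), the dichotomy inequality for $\mathscr{V}(t)P(t)\mathscr{V}^{-1}(\sigma)$ on $\{\sigma\le t\}$, and hypothesis (H1) (entering through $C_a$ and $V_\Lambda$) combine --- as in the computation in the proof of Lemma \ref{lemma1} based on \cite[Remark 4.11]{BFS-JDE2018}, now with $2N$ as the sup-bound of the integrand $f(\cdot)-f(s)$ --- to give
\[
\sup_{t\ge s}\left\|\int_s^t d_{\sigma}[\mathscr{V}(t,\sigma)P(\sigma)](f(\sigma)-f(s))\right\|\le 2NK^2C_a^3e^{3C_aV_\Lambda}V_\Lambda^2 .
\]
For the unstable term, the dichotomy inequality for $\mathscr{V}(t)(Id-P(t))\mathscr{V}^{-1}(\sigma)$ on $\{\sigma\ge t\}$ furnishes the factor $e^{-\alpha(\sigma-t)}$, which simultaneously guarantees that the improper integral converges and controls it; the same scheme yields
\[
\sup_{t\ge s}\left\|\int_t^\infty d_{\sigma}[\mathscr{V}(t,\sigma)(Id-P(\sigma))](f(\sigma)-f(s))\right\|\le 2NK(1+K)C_a^3e^{3C_aV_\Lambda}V_\Lambda^2 .
\]
Adding the two estimates, $\|(\mathscr{H}f)(t)\|\le 2N\bigl(2K^2+K\bigr)C_a^3e^{3C_aV_\Lambda}V_\Lambda^2$ for every $t\ge s$; hence $\mathscr{H}f$ is bounded with sup-bound at most $2N\bigl(2K^2+K\bigr)C_a^3e^{3C_aV_\Lambda}V_\Lambda^2$, and combined with linearity this shows $\mathscr{H}$ is a bounded linear operator on the space of bounded functions on $[s,\infty)$, with operator norm at most $2\bigl(2K^2+K\bigr)C_a^3e^{3C_aV_\Lambda}V_\Lambda^2$.

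The step I expect to be the main obstacle is making the two displayed Perron--Stieltjes bounds fully rigorous, i.e.\ controlling the total variation of the operator-valued maps $\sigma\mapsto\mathscr{V}(t,\sigma)P(\sigma)$ and $\sigma\mapsto\mathscr{V}(t,\sigma)(Id-P(\sigma))$ over the \emph{unbounded} intervals $[s,t]$ (uniformly in $t$) and $[t,\infty)$. This is not a consequence of the local bounded-variation estimates of \cite[Theorem 4.4]{CFF-CMJ} alone; it requires interlacing those local estimates with the exponential-dichotomy decay --- a Gronwall-type summation over unit-length subintervals --- which is precisely the argument underpinning \cite[Remark 4.11]{BFS-JDE2018}. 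A secondary point worth spelling out carefully is the convergence of the improper integral as the upper limit tends to $\infty$, which again relies on the $e^{-\alpha(\sigma-t)}$ factor dominating the variation growth of $\mathscr{V}$.
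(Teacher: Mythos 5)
Your proposal is correct and follows essentially the same route as the paper: linearity from the linearity of the Perron--Stieltjes integral, then the two uniform estimates $2NK^2C_a^3e^{3C_aV_\Lambda}V_\Lambda^2$ and $2NK(1+K)C_a^3e^{3C_aV_\Lambda}V_\Lambda^2$ taken from \cite[Remark 4.11]{BFS-JDE2018} together with (H1)--(H2) and the computation already used in Lemma \ref{lemma1}, summed to give the operator bound $K(1+2K)$-type constant. Your extra remarks on where the variation control over unbounded intervals really comes from are a sensible elaboration of the same citation, not a different argument.
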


\begin{proof}
Using the proof of Lemma \ref{lemma1}, the operator $\mathscr{H}$ is meaningful and linear. Combining (H1)-(H2) and   \cite[Remark 4.11]{BFS-JDE2018}, we have
\[
\sup\limits_{t\geq s} \left\|\int_s^t d_{\sigma}[\mathscr{V}(t,\sigma)P(\sigma)] (f(\sigma)-f(s))\right\|\leq 2NK^2 C_a^3 e^{3C_a V_\Lambda}V_\Lambda^2
\]
and
\[
\sup\limits_{t\geq s} \left\|\int_{t}^\infty d_{\sigma}[\mathscr{V}(t,\sigma)(Id-P(\sigma))](f(\sigma)-f(s))\right\|
\leq 2NK(1+K)C_a^3 e^{3C_aV_\Lambda}V_\Lambda^2.
\]
Hence, one can immediately obtain that
$\|\mathscr{H}\|\leq 2NK(1+2K)C_a^3 e^{3C_a V_\Lambda}V_\Lambda^2. $ This completes the proof.
\end{proof}

\begin{lemma}\label{lemma3}
Assume that Eq. \eqref{LGODE} admits an exponential dichotomy.  If  (H1)-(H2) hold and the Lipschitz-type constant $V_h$ in (H2) is sufficiently small,
then for any $s\in\mathbb{R}$ and $\zeta\in\mathscr{E}^s (s)$,
the integral equation \eqref{LPE} with   $P(s)z(s)=\zeta$ has a unique  bounded solution
$\phi_t (s,\zeta)$.
Moreover, $\phi(s,0)=0$ and $\phi(s,\zeta)$ is Lipschitzian in $\zeta$.
\end{lemma}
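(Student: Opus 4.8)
The plan is to rewrite the integral equation \eqref{LPE} as a fixed-point problem for a contraction and then invoke the Banach fixed point theorem. Fix $s\in\mathbb{R}$ and $\zeta\in\mathscr{E}^s(s)$, and work in the Banach space $\mathscr{B}_s$ of all bounded functions $z:[s,\infty)\to\mathscr{Z}$ with the norm $\|z\|_{\infty}=\sup_{t\ge s}\|z(t)\|$. On $\mathscr{B}_s$ I would define
\[
(\mathscr{T}_{s,\zeta}z)(t)=\mathscr{V}(t,s)\zeta+\int_s^t DF(z(\tau),\gamma)+(\mathscr{H}G_z)(t),\qquad G_z(t):=\int_s^t DF(z(\tau),\gamma),
\]
where $\mathscr{H}$ is the operator in \eqref{Hf}; since $G_z(s)=0$, the term $(\mathscr{H}G_z)(t)$ reproduces exactly the last two integrals of \eqref{LPE}, so a bounded solution of \eqref{LPE} with $P(s)z(s)=\zeta$ is the same thing as a fixed point of $\mathscr{T}_{s,\zeta}$. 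A short computation using the commutation relation $P(\sigma)\mathscr{V}(\sigma,s)=\mathscr{V}(\sigma,s)P(s)$ already employed in the proof of Lemma \ref{lemma1} shows that any fixed point automatically satisfies $P(s)z(s)=\zeta$, so the constraint need not be imposed separately.

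I would then establish the two quantitative estimates on which the contraction argument rests. For the self-map property: $\|\mathscr{V}(t,s)\zeta\|\le Ke^{-\alpha(t-s)}\|\zeta\|\le K\|\zeta\|$ by \eqref{ED} and $\zeta\in\mathscr{E}^s(s)$; representing $\int_s^t DF(z(\tau),\gamma)$ by Riemann--Stieltjes sums and applying \eqref{HHH1} as in the proof of Lemma \ref{lemma1} gives $\|G_z(t)\|\le|h(t)-h(s)|\le V_h$ for all $t\ge s$, a bound \emph{independent of $z$}, together with $\mathrm{var}_s^t G_z\le V_h$, so that $G_z\in\mathscr{BVF}([s,t],\mathscr{Z})$ and Lemma \ref{lemma2} yields $\|\mathscr{H}G_z\|_{\infty}\le 2V_hK(1+2K)C_a^3e^{3C_aV_\Lambda}V_\Lambda^2$; hence $\mathscr{T}_{s,\zeta}$ maps $\mathscr{B}_s$ into itself. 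For the contraction property: applying \eqref{HHH2} termwise on a $\delta$-fine tagged division gives, for $z,w\in\mathscr{B}_s$,
\[
\Bigl\|\int_s^t D\bigl[F(z(\tau),\gamma)-F(w(\tau),\gamma)\bigr]\Bigr\|\le V_h\|z-w\|_{\infty}\qquad(t\ge s),
\]
so $\|G_z-G_w\|_{\infty}\le V_h\|z-w\|_{\infty}$, and feeding $G_z-G_w$ into Lemma \ref{lemma2} gives $\|\mathscr{H}(G_z-G_w)\|_{\infty}\le 2V_h\|z-w\|_{\infty}K(1+2K)C_a^3e^{3C_aV_\Lambda}V_\Lambda^2$. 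Adding these, $\|\mathscr{T}_{s,\zeta}z-\mathscr{T}_{s,\zeta}w\|_{\infty}\le\theta\|z-w\|_{\infty}$ with $\theta:=V_h\bigl(1+2K(1+2K)C_a^3e^{3C_aV_\Lambda}V_\Lambda^2\bigr)$. Because the bracket depends only on $K$, $C_a$, $V_\Lambda$, the assumed smallness of $V_h$ forces $\theta<1$, and the Banach fixed point theorem on the complete space $\mathscr{B}_s$ produces the unique fixed point $\phi_t(s,\zeta)$, i.e. the unique bounded solution of \eqref{LPE} with $P(s)z(s)=\zeta$.

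Finally I would read off the remaining two claims. When $\zeta=0$ the hypothesis $F(0,t)=0$ makes $z\equiv 0$ a fixed point of $\mathscr{T}_{s,0}$, so $\phi_t(s,0)\equiv 0$ by uniqueness, whence $\phi(s,0)=0$. For Lipschitz dependence on the initial datum, subtracting the fixed-point identities for $\zeta$ and $\widetilde\zeta$ and reusing the estimates above yields $\|\phi_{\cdot}(s,\zeta)-\phi_{\cdot}(s,\widetilde\zeta)\|_{\infty}\le K\|\zeta-\widetilde\zeta\|+\theta\|\phi_{\cdot}(s,\zeta)-\phi_{\cdot}(s,\widetilde\zeta)\|_{\infty}$, hence $\|\phi_{\cdot}(s,\zeta)-\phi_{\cdot}(s,\widetilde\zeta)\|_{\infty}\le\frac{K}{1-\theta}\|\zeta-\widetilde\zeta\|$, which is the asserted Lipschitz property.

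The main obstacle is the term $\int_s^t DF(z(\tau),\gamma)$, which has no counterpart in the classical Riemann-integral Lyapunov--Perron equation (see Remark \ref{rem111}): it cannot be handled by a pointwise bound on $F$ followed by integration in $t$, so one must extract both the $z$-uniform a priori bound $\|G_z\|_{\infty}\le V_h$ and the contraction estimate $\|G_z-G_w\|_{\infty}\le V_h\|z-w\|_{\infty}$ directly from the structural inequalities \eqref{HHH1}--\eqref{HHH2} via $\delta$-fine tagged divisions, and then verify that $G_z$ and $G_z-G_w$ are admissible inputs of the operator $\mathscr{H}$ of Lemma \ref{lemma2}. Once these points are secured, the rest is the routine contraction-mapping argument.
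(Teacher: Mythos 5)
Your proposal is correct and follows essentially the same route as the paper: a contraction-mapping argument on the space of bounded functions on $[s,\infty)$, with the operator built from $\mathscr{V}(t,s)\zeta$, the term $\int_s^t DF(z(\tau),\gamma)$, and the operator $\mathscr{H}$ of Lemma \ref{lemma2}, the key estimates being extracted from \eqref{HHH1}--\eqref{HHH2} via $\delta$-fine tagged divisions exactly as in the paper's proof. The only differences are cosmetic (slightly sharper constants, and the observation that $P(s)z(s)=\zeta$ holds automatically at a fixed point), so no changes are needed.
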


\begin{proof}
  Let $\Theta$ be the set  of all functions $z:[s,\infty)\to\mathscr{Z}$ satisfying $\|z(t)\|<\infty$ for all $t\geq s$.
  We  define  the operator $\mathscr{J}$ as follows:
 \[
 (\mathscr{J}_\zeta z)(t):=\mathscr{V}(t,s)P(s)\zeta+(\widehat{f}(t))_z+\mathscr{H}(\widehat{f}(t))_z,
 \]
where $\zeta=P(s)z(s)$, $(\widehat{f}(t))_z=\int_s^t DF(z(\tau),\gamma)$ and $\mathscr{H}$ is defined by Lemma \ref{lemma2}.
 Now we claim that $\mathscr{J}_\zeta$ is a contraction map on $\Theta$ uniformly with respect to $\zeta$.
  Firstly, $\mathscr{J}_\zeta$ maps set $\Theta$ into itself.
  In fact, we have
\[\begin{split}
 \sup\limits_{t\geq s} \|\mathscr{H}(\widehat{f}(t))_z\|\leq & \sup\limits_{t\geq s}
 \left\|\int_s^t d_{\sigma}[\mathscr{V}(t,\sigma)P(\sigma)] \int_s^\sigma DF(z(\tau),\gamma)\right\|\\
 &+ \sup\limits_{t\geq s} \left\|\int_{t}^\infty d_{\sigma}[\mathscr{V}(t,\sigma)(Id-P(\sigma))]\int_s^\sigma DF(z(\tau),\gamma)\right\|\\
 \leq& \sup\limits_{t\geq s} \left\|\int_s^t d_{\sigma}[\mathscr{V}(t,\sigma)P(\sigma)]\right\| | h(\sigma)-h(s)|\\
 &+ \sup\limits_{t\geq s} \left\|\int_{t}^\infty d_{\sigma}[\mathscr{V}(t,\sigma)(Id-P(\sigma))]\right\| |h(\sigma)-h(s)|\\
 \leq& 2V_h K(1+2K)C_a^3 e^{3C_a V_\Lambda}V_\Lambda^2,
\end{split}\]
 due to Lemma \ref{lemma2} and condition (H2). Then
 \[
  \sup\limits_{t\geq s}\| (\mathscr{J}_\zeta z)(t)\|\leq Ke^{-\alpha(t-s)}\|\zeta\|+2V_h+2V_h K(1+2K)C_a^3 e^{3C_a V_\Lambda}V_\Lambda^2<\infty.
 \]
 Taking arbitrary $z_1(t), z_2(t)\in \Theta$, from the definition of Kurzweil integral and   condition (H2), it follows that
\begin{equation*}
\begin{split}
  &\left\|\int_s^t DF(z_1(\tau),\gamma)-DF(z_2(\tau),\gamma)\right\|\\
  =&
\left\|\sum_{j=1}^{|D|}\left[F(z_1(\tau_j),t_j)-F(z_1(\tau_j),t_{j-1})\right]-\left[F(z_2(\tau_j),t_j)-F(z_2(\tau_j),t_{j-1})\right]\right\|  \\
\leq& \sum_{j=1}^{|D|}\|z_1(\tau_j)-z_2(\tau_j)\|\cdot |h(s_j)-h(s_{j-1})| \\
=& \int_s^t \|z_1(\tau)-z_2(\tau)\|dh(\tau)\\
\leq& \sup\limits_{\tau\geq s}\|z_1(\tau)-z_2(\tau)\| 2V_h.
\end{split}
\end{equation*}
 Then we have
\[\begin{split}
  \sup\limits_{t\geq s}\| (\mathscr{J}_\zeta z_1-\mathscr{J}_\zeta z_2)(t)\|\leq&  \sup\limits_{t\geq s}\left\|\int_s^t DF(z_1(\tau),\gamma)-DF(z_2(\tau),\gamma)\right\|\\
 &+\sup\limits_{t\geq s}\left\|\int_s^t d_{\sigma}[\mathscr{V}(t,\sigma)P(\sigma)] \int_s^\sigma (DF(z_1(\tau),\gamma)-DF(z_2(\tau),\gamma)) \right\|\\
 &+\sup\limits_{t\geq s} \left\|\int_{t}^\infty d_{\sigma}[\mathscr{V}(t,\sigma)(Id-P(\sigma))]\int_s^\sigma  (DF(z_1(\tau),\gamma)-DF(z_2(\tau),\gamma)) \right\|\\
 \leq& 2V_h (1+ K(1+2K))C_a^3 e^{3C_a V_\Lambda}V_\Lambda^2 \cdot \sup\limits_{t\geq s}\|z_1(t)-z_2(t)\|.
 \end{split}\]
Taking $V_h$ sufficiently small such that
$$
2V_h (1+ K(1+2K))C_a^3 e^{3C_a V_\Lambda}V_\Lambda^2<1.
$$
Then $\mathscr{J}_\zeta$ is contractive and it has a unique fixed point $\phi_t (s,\zeta)\in\Theta$.
   For any $\zeta_1,\zeta_2\in\mathscr{E}^s(s)$ and $z(s)\in\Theta$, we get
 \[
 \| (\mathscr{J}_{\zeta_1} z-\mathscr{J}_{\zeta_2} z)(s)\|\leq Ke^{-\alpha(s-s)}\|\zeta_1-\zeta_2\|=K\|\zeta_1-\zeta_2\|.
 \]
We write $\phi(s,\zeta)=\phi_{t}(s,\zeta)$. It is clear that
\[\begin{split}
\|\phi(s,\zeta_1)-\phi(s,\zeta_2)\|=& \|(\mathscr{J}_{\zeta_1} (\phi(s,\zeta_1))-\mathscr{J}_{\zeta_2} (\phi(s,\zeta_2)))(s)\| \\
\leq& \|(\mathscr{J}_{\zeta_1} (\phi(s,\zeta_1))-\mathscr{J}_{\zeta_1} (\phi(s,\zeta_2)))(s)\|+\|(\mathscr{J}_{\zeta_1} (\phi(s,\zeta_2))-\mathscr{J}_{\zeta_2} (\phi(s,\zeta_2)))(s)\| \\
\leq& 2V_h (1+ K(1+2K))C_a^3 e^{3C_a V_\Lambda}V_\Lambda^2 \cdot \|\phi(s,\zeta_1)-\phi(s,\zeta_2)\|+K\|\zeta_1-\zeta_2\|,
\end{split}\]
and thus,
 \[
 \|\phi(s,\zeta_1)-\phi(s,\zeta_2)\|\leq \frac{K}{1-\mathscr{L}} \|\zeta_1-\zeta_2\|,
 \]
 where
$$
\mathscr{L}:=2V_h (1+ K(1+2K))C_a^3 e^{3C_a V_\Lambda}V_\Lambda^2<1.
$$
 This implies that $\phi(s,\zeta)$ is Lipschitz continuous in $\zeta$.
   By the uniqueness of solution of Eq. \eqref{NL}, we have $\phi(s,0)=0$, and the proof is completed.
\end{proof}

\subsection{Proof of  Main Theorem}

\begin{proof}[Proof of Theorem \ref{theorem1}]
For every fixed $s\in\mathbb{R}$ and $\zeta\in \mathscr{E}^s(s)$, we obtain from Lemma \ref{lemma3} that there is a unique solution $\phi_t(s,\zeta)\in\Theta$ such that $\phi(s,0)=0$ and
\begin{equation}\label{FL}
\begin{split}
  \phi_t(s,\zeta)=& \mathscr{V}(t,s)P(s)\zeta+\int_s^t DF(z(\tau),\gamma)-\int_s^t d_{\sigma}[\mathscr{V}(t,\sigma)P(\sigma)] \int_s^\sigma DF(z(\tau),\gamma)\\
  &+\int_{t}^\infty d_{\sigma}[\mathscr{V}(t,\sigma)(Id-P(\sigma))]\int_s^\sigma DF(z(\tau),\gamma).
\end{split}
\end{equation}
Define
\[
 \mathscr{M}_{gra}:=\{(s,z(s))\in\mathbb{R}\times\mathscr{Z}:z(t,s,z(s))\; \mathrm{is} \; \mathrm{defined} \; \mathrm{in} \;\mathrm{ the} \;\mathrm{ set} \; \Theta\}.
\]
We derive from Lemma \ref{lemma1} and \eqref{FL} that the initial value $z(s)$, which  compose the set $ \mathscr{M}_{gra}$ can be expressed as:
\[
z(s)=\phi(s,\zeta)=\zeta+\int_s^\infty d_{\sigma}[\mathscr{V}(s,\sigma)(Id-P(\sigma))]\int_s^\sigma DF(z(\tau),\gamma):=\zeta+m(s,\zeta).
\]
Since $\phi(s,\zeta)$ is Lipschitz continuous with respect to $\zeta$, for any $\zeta_1,\zeta_2\in\mathscr{E}^s(s)$, we have
\[\begin{split}
\|m(s,\zeta_1)-m(s,\zeta_2)\|
\leq& \left\|\int_{s}^\infty d_{\sigma}[\mathscr{V}(s,\sigma)(Id-P(\sigma))]\int_s^\sigma  (DF(\phi(\tau,\zeta_1),\gamma)-DF(\phi(\tau,\zeta_2),\gamma)) \right\|\\
\leq&  \left\|\int_{s}^\infty d_{\sigma}[\mathscr{V}(s,\sigma)(Id-P(\sigma))]\int_s^\sigma \|\phi(\tau,\zeta_1)-\phi(\tau,\zeta_2)\|dh(\gamma) \right\|\\
\leq& 2V_h (1+K)KC_a^3 e^{3C_a V_\Lambda}V_\Lambda^2\cdot \frac{K\|\zeta_1-\zeta_2\| }{1-2V_h (1+ K(1+2K))C_a^3 e^{3C_a V_\Lambda}V_\Lambda^2}\\
\leq& \widetilde{C}\|\zeta_1-\zeta_2\|,
\end{split}\]
for some constant $\widetilde{C}>0$.
  Therefore, $m(s,\zeta)$ is Lipschitz continuous in $\zeta$.
  In addition, since $\phi(s,0)=0$ and $F(0,s)=0$, we have $m(s,0)=0$.
Therefore, $\mathscr{M}_{gra}$ is a stable manifold.

 For the invariance of the stable manifold, we proceed as follows.
  Given $(s,z(s))\in \mathscr{M}_{gra}$, Eq. \eqref{NL} has a unique solution $z(t)=z(t,s,z(s))$.
  Obviously, $z(t,s,z(s))\in\Theta$ and $(t,z(t))\in \mathscr{M}_{gra}$ for any $t\geq s$.
  We  claim that $(t,z(t))\in \mathscr{M}_{gra}$ for any $t<s$.
  In fact, let $t_1<s$ and $w=z(t_1,s,z(s))$.
  Then Eq. \eqref{NL} has a unique solution $z(t,t_1,w)$ with the initial value $(t_1,w)$.
  It is clear that $(s,z(s,t_1,w))=(s,z(s))\in \mathscr{M}_{gra}$ and $\sup_{t\in[t_1,s]}\|z(t,t_1,w)\|\leq \widetilde{N}$ for some $\widetilde{N}>0$.
  Therefore, $z(t,t_1,w)\in\Theta$.
  In view of definition of $\mathscr{M}_{gra}$, we see that
$(t_1,w)=(t_1,z(t_1,s,z(s)))\in \mathscr{M}_{gra}$.

    Next, we prove the unboundedness of the solutions outside of the stable manifolds. 
     That is, we prove that for some $s\in\mathbb{R}$, any solutions $z(t)$ of Eq. \eqref{NL} with $(s,z(s))\not\in \mathscr{M}_{gra}$
is unbounded on $[s,\infty)$.
   By way of contradiction, in fact, from Lemma \ref{lemma1}, if we suppose that $z(t)$ is a bounded solution of Eq. \eqref{LPE} for any $t\geq s$.
   Then, $z(t)=P(t)z(t)+(Id-P(t))z(t)=u(t)+v(t)$, i.e.,
 \[
 u(t)=P(t)\mathscr{V}(t,s)u(s)+\int_s^t P(t)DF(z(\tau),\gamma)-\int_s^t d_\sigma [\mathscr{V}(t,\sigma)P(\sigma)]\int_s^\sigma DF(z(\tau),\gamma)
 \]
 and
 \begin{equation}\label{vvv}
   v(t)=(Id-P(t))\mathscr{V}(t,s)x+\int_s^t (Id-P(t))DF(z(\tau),\gamma)+\int_t^\infty  [\mathscr{V}(t,\sigma)(Id-P(\sigma))]\int_s^\sigma DF(z(\tau),\gamma),
 \end{equation}
where
\[
x=v(s)-\int_s^\infty d_\sigma [\mathscr{V}(t,\sigma)(Id-P(\sigma))]\int_s^\sigma DF(z(\tau),\gamma).
\]
 For any $t\geq s$, since the solution is bounded, \eqref{vvv} converges and is uniformly bounded.
  However,
\[
\|(Id-P(t))\mathscr{V}(t,s)x\|\geq K^{-1} e^{\alpha(t-s)}\|x\|, \quad \mathrm{for} \; t\geq s,
\]
which implies that $x=0$ since $z(t)$ is bounded. Therefore, \eqref{LPE} is valid.
However, the bounded solution of this system is unique, and satisfies $(s,z(s))\in\mathscr{M}_{gra}$. This contradicts to our assumption. Therefore, outside of the stable manifolds, any solutions are unbounded.
Consequently, the proof  is completed.
\end{proof}

\section{Applications}

\subsection{Stable manifold theorem for measure differential equations (MDEs)}
\subsubsection{Basic concepts for MDEs}
Denote that $\mathscr{Z}$  is a Banach space and  $I\subset \mathbb{R}$ is an interval.
 Consider the   linear  MDEs
\begin{equation}\label{ML}
{   Dz=\mathscr{A}(t)z+\mathscr{C}(t)Du,}
\end{equation}
where $Dz$ and $Du$ denote the distributional derivatives of $z$ and $u$ in the sense of Schwartz, and the functions
$\mathscr{A}: I\to \mathscr{B}(\mathscr{Z})$, $\mathscr{C}: I\to \mathscr{B}(\mathscr{Z})$ and $u:I\to\mathbb{R}$
satisfy:
\\
(D1) $\mathscr{A}(t)$ is Perron integrable for any  $t\in I$.\\
(D2) $u(t)$ is of locally bounded variation for any  $t\in I$ and continuous from the left on $I\backslash \{\inf I\}$. \\
(D3) $du$ denotes the Lebesgue-Stieltjes measure generated by the
function $u$, $\mathscr{C}(t)$ is Perron-Stieltjes integrable in  $u$ for any  $t\in I$.\\
Moreover, we consider some additional assumptions:\\
(D4) There exists a {Lebesgue} measurable function
$m_1:I\to\mathbb{R}$ satisfying for any $c,d \in I$, we have
$\int_c^d m_1(s)ds<\infty$ and
\[
\left\|\int_c^d \mathscr{A}(s)ds\right\|\leq \int_c^d m_1(s)ds.
\]
(D5) There exists a function $du$-measurable $m_2:I\to\mathbb{R}$ satisfying for any $c,d \in I$,  we have
$\int_c^d m_2(s)du(s)<\infty$ and
\[
\left\|\int_c^d \mathscr{C}(s)du(s)\right\|\leq \int_c^d m_2(s)du(s).
\]
(D6) For all $t$ such that $t$ is a point of discontinuity of $u$, we have
\[
\left(Id+\lim\limits_{r\to t^+}\int_t^r\mathscr{C}(s)du(s)\right)^{-1}\in \mathscr{B}(\mathscr{Z}).
\]

By the assumptions (D1)--(D3), for the initial value $z(t_0)=z_0$, we say that $z:[c,d]\subset I\to \mathscr{Z}$ is a solution of \eqref{ML}, if
\[
z(t)=z_0+\int_{t_0}^{t} \mathscr{A}(s)z(s)ds+\int_{t_0}^t \mathscr{C}(s)z(s)du(s).
\]
   If all conditions (D1)--(D6) hold, then the existence and uniqueness of a solution of \eqref{ML} associated to the initial value $z(t_0)=z_0$ follows from Theorem 5.2 in \cite{BFS-JDE2018} immediately. Hence, conditions (D1)--(D6) are always satisfied throughout this subsection.

\begin{lemma}(\cite{S-BOOK1992} Theorem 5.17)
 Given $t_0\in [c,d]$, for an initial condition $z(t_0)=z_0$, the function $z:[c,d]\subset I\to \mathscr{Z}$ is a solution of \eqref{ML},  iff  $z$
is a solution of
\begin{equation}\label{GGL}
  \begin{cases}
  \frac{dz}{d\tau}=D[\Lambda(t)z+G(t)z],\\
  z(t_0)=z_0,
  \end{cases}
\end{equation}
where $\Lambda(t)=\int_{t_0}^{t} \mathscr{A}(s)ds$ and $G(t)=\int_{t_0}^t \mathscr{C}(s)du(s)$.
\end{lemma}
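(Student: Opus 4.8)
The plan is to reduce both solution concepts to one and the same integral equation and then match the two integral terms one by one.

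\textbf{Step 1 (well-posedness of the primitives).} Under (D4)--(D5) the primitives $\Lambda(t)=\int_{t_0}^t\mathscr{A}(s)\,ds$ and $G(t)=\int_{t_0}^t\mathscr{C}(s)\,du(s)$ are well defined on $I$ and of locally bounded variation, with $\mathrm{var}_c^d\Lambda\le\int_c^d m_1(s)\,ds$ and $\mathrm{var}_c^d G\le\int_c^d m_2(s)\,du(s)$; moreover $\Lambda$ is continuous, while the jumps of $G$ occur only at the (at most countably many) discontinuities of $u$, and $Id$ plus each such jump is invertible by (D6). Hence $K(z,t):=\Lambda(t)z+G(t)z$ is an admissible right-hand side for \eqref{GGL} and all the Kurzweil integrals below exist. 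By the definition of a solution of a generalized ODE (see Section 2), together with additivity of the Kurzweil integral in the integrand, $z:[c,d]\to\mathscr{Z}$ solves \eqref{GGL} with $z(t_0)=z_0$ if and only if
\[z(t)=z_0+\int_{t_0}^t D[\Lambda(\gamma)z(\tau)]+\int_{t_0}^t D[G(\gamma)z(\tau)],\qquad t\in[c,d],\]
and, exactly as recalled around \eqref{Z1}, these two Kurzweil integrals are the Perron--Stieltjes integrals $\int_{t_0}^t d[\Lambda(s)]z(s)$ and $\int_{t_0}^t d[G(s)]z(s)$, with Riemann--Stieltjes sums $\sum[\Lambda(t_j)-\Lambda(t_{j-1})]z(\tau_j)$ and $\sum[G(t_j)-G(t_{j-1})]z(\tau_j)$.

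\textbf{Step 2 (substitution identities).} The crux is the pair of identities
\[\int_{t_0}^t d[\Lambda(s)]z(s)=\int_{t_0}^t\mathscr{A}(s)z(s)\,ds,\qquad \int_{t_0}^t d[G(s)]z(s)=\int_{t_0}^t\mathscr{C}(s)z(s)\,du(s),\]
valid for every regulated (in particular, continuous or bounded-variation) $z:[c,d]\to\mathscr{Z}$. These are the standard substitution / change-of-integrator theorems for the Kurzweil--Stieltjes integral: if an indefinite integral $\eta(t)=\int_{t_0}^t\varphi(s)\,d\nu(s)$ exists for all $t$, then $\int_{t_0}^t\psi(s)\,d\eta(s)=\int_{t_0}^t\psi(s)\varphi(s)\,d\nu(s)$ whenever the latter exists; applying this with $(\varphi,\nu,\psi)=(\mathscr{A},\mathrm{id},z)$ gives the first identity and with $(\varphi,\nu,\psi)=(\mathscr{C},u,z)$ the second. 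This is precisely the content of \cite[Theorem 5.17]{S-BOOK1992}, whose proof I would follow; compare also the substitution and Hake-type theorems in \cite{Honig-book}. The operator-valued setting introduces nothing new, since $\mathscr{A}(s),\mathscr{C}(s)\in\mathscr{B}(\mathscr{Z})$ and all estimates are taken in the operator norm.

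\textbf{Step 3 (conclusion and main obstacle).} Inserting the two identities into the integral equation of Step 1 shows that $z$ solves \eqref{GGL} with $z(t_0)=z_0$ if and only if
\[z(t)=z_0+\int_{t_0}^t\mathscr{A}(s)z(s)\,ds+\int_{t_0}^t\mathscr{C}(s)z(s)\,du(s),\qquad t\in[c,d],\]
which is exactly the definition of a solution of the linear MDE \eqref{ML} with $z(t_0)=z_0$; this yields the equivalence in both directions. The main obstacle is Step 2 for the Lebesgue--Stieltjes integrator $du$: one must control the Perron--Stieltjes sums at the discontinuity points of $u$ and verify that $\int d[G(s)]z(s)$ still collapses to $\int\mathscr{C}(s)z(s)\,du(s)$ there. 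This is handled by working on subintervals on which $G$ is the uniform limit of its Riemann--Stieltjes approximations, invoking the Saks--Henstock lemma and additivity of the Kurzweil integral, with (D2) (left-continuity of $u$) and (D6) guaranteeing that no anomaly occurs at a jump — precisely the argument underlying \cite[Theorem 5.17]{S-BOOK1992}.
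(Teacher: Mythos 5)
Your argument is correct and is essentially the standard proof of this result: the paper itself gives no proof (it simply cites \cite[Theorem 5.17]{S-BOOK1992}), and that cited proof proceeds exactly as you do, rewriting the Kurzweil integral $\int D[(\Lambda+G)(t)z(\tau)]$ as the Perron--Stieltjes integral $\int d[\Lambda(s)+G(s)]z(s)$ and then collapsing it, via the substitution theorem for indefinite integrals, to $\int \mathscr{A}(s)z(s)\,ds+\int\mathscr{C}(s)z(s)\,du(s)$, so that both solution concepts coincide with the same integral equation. Your Steps 1--3, including the bounded-variation estimates from (D4)--(D5) and the care at the discontinuities of $u$, match that route.
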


    The fundamental operator $U:I\times I\to \mathscr{B}(\mathscr{Z})$ of MDEs \eqref{ML} was given by \cite{BFS-JDE2018} satifying
 such that
\begin{equation}\label{MMU}
  U(t,s)=Id+\int_s^t \mathscr{A}(r)U(r,s)dr+\int_s^t \mathscr{C}(r)U(r,s)du(r), \quad t,s\in I.
\end{equation}
Additionally,  given $s\in I$, $U(\cdot,s)$ is an operator of locally bounded variation in $I$, and $z(t)=U(t,t_0)z_0$ is the solution of
\eqref{ML} satisfying   $z(t_0)=z_0\in \mathscr{Z}$.

\begin{definition}\label{MED}\cite{BFS-JDE2018}
The MDEs \eqref{ML} admit an exponential dichotomy with $(P,K,\alpha)$ on $I$,  if there exist  a projection $P:I \to \mathscr{Z}$ and
constants $K, \alpha$  satisfying
\[\begin{cases}
 \|\mathscr{U}(t) P(t)\mathscr{U}^{-1}(s)\|\leq K e^{-\alpha(t-s)}, \quad t\geq s;\\
 \|\mathscr{U}(t)(Id-P(t))\mathscr{U}^{-1}(s)\|\leq Ke^{\alpha(t-s)}, \quad t< s,
\end{cases}\]
where $\mathscr{U}(t)=U(t,t_0)$ and $\mathscr{U}^{-1}(t)=U(t_0,t)$.
\end{definition}

\begin{lemma}\label{lemma-ed}
(\cite{BFS-JDE2018} Proposition 5.7)
The MDEs \eqref{ML} admit an exponential dichotomy with $(P,K,\alpha)$ iff the generalized ODEs
\begin{equation}\label{GGODE}
  \frac{dz}{d\tau}=D[\Lambda(t)z+G(t)z], \quad t\in I,
\end{equation}
admit an exponential dichotomy,
where $\Lambda(t)=\int_{t_0}^{t} \mathscr{A}(s)ds$ and $G(t)=\int_{t_0}^t \mathscr{C}(s)du(s)$.
\end{lemma}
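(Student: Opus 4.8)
The plan is to establish the equivalence of exponential dichotomies between the measure differential equation \eqref{ML} and the associated generalized ODE \eqref{GGODE} by relating their fundamental operators and transporting the dichotomy estimates through that identification. The key observation is that by the correspondence lemma (\cite{S-BOOK1992} Theorem 5.17), with $\Lambda(t)=\int_{t_0}^{t}\mathscr{A}(s)\,ds$ and $G(t)=\int_{t_0}^{t}\mathscr{C}(s)\,du(s)$, the function $z$ solves \eqref{ML} with $z(t_0)=z_0$ if and only if it solves \eqref{GGODE} with the same initial condition. Since solutions of each equation are generated by their respective fundamental operators — $z(t)=U(t,t_0)z_0$ for \eqref{ML} via \eqref{MMU}, and $z(t)=W(t,t_0)z_0$ for \eqref{GGODE}, where $W$ is the fundamental operator furnished by Lemma \ref{ppp-lemma} applied to the generalized ODE $\tfrac{dz}{d\tau}=D[\Lambda(t)z+G(t)z]$ — the uniqueness of solutions (guaranteed under (D1)--(D6), cf. Theorem 5.2 in \cite{BFS-JDE2018}) forces $U(t,s)=W(t,s)$ for all $t,s\in I$. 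First I would make this identification precise: evaluate both sides on an arbitrary $z_0\in\mathscr{Z}$, note both $U(\cdot,s)z_0$ and $W(\cdot,s)z_0$ solve the respective (equivalent) Cauchy problems with value $z_0$ at $t=s$, and conclude equality of the operators.

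Once $U(t,s)=W(t,s)$ is in hand, the remainder is essentially a rewriting. By definition $\mathscr{U}(t)=U(t,t_0)$, $\mathscr{U}^{-1}(t)=U(t_0,t)$, and for the generalized ODE the relevant fundamental operator evaluated at base point $t_0$ plays the role of $\mathscr{V}(t)$ in the Definition of exponential dichotomy (here one takes $0$ replaced by $t_0$, or conjugates by $W(t_0,0)$ which is a fixed invertible operator and does not affect the existence of a dichotomy, only the constant $K$). Therefore the two estimate systems
\[
\begin{cases}
\|\mathscr{U}(t)P(t)\mathscr{U}^{-1}(s)\|\leq Ke^{-\alpha(t-s)},& t\geq s,\\
\|\mathscr{U}(t)(Id-P(t))\mathscr{U}^{-1}(s)\|\leq Ke^{\alpha(t-s)},& t<s,
\end{cases}
\]
and the corresponding system for \eqref{GGODE} in terms of its fundamental operator are literally the same inequalities with the same projection $P$, so each direction of the ``iff'' follows immediately: a dichotomy for \eqref{ML} with data $(P,K,\alpha)$ yields one for \eqref{GGODE} with the same data (possibly adjusting $K$ by $\|W(t_0,0)\|\cdot\|W(0,t_0)\|$ if one insists on normalizing at $0$), and conversely. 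I would also verify that \eqref{GGODE} genuinely satisfies (A1)--(A2) so that the notion of exponential dichotomy from the earlier Definition applies to it: (A1) holds because $\Lambda+G$ has bounded variation on compact subintervals — $\Lambda$ by (D4) and $G$ by (D5) — and (A2)/(H1)-type invertibility at jump points follows from (D6), since the jumps of $\Lambda+G$ at a discontinuity $t$ of $u$ are exactly $\lim_{r\to t^+}\int_t^r\mathscr{C}(s)\,du(s)$ (the absolutely continuous part $\Lambda$ contributing no jump).

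The main obstacle I anticipate is the bookkeeping around the base point: the Definition of exponential dichotomy for generalized ODEs is stated with $\mathscr{V}(t):=V(t,0)$ normalized at $0$, whereas Definition \ref{MED} for MDEs normalizes at $t_0$. These are equivalent formulations because $V(t,0)=V(t,t_0)V(t_0,0)$ and $V(t_0,0)$ is a fixed bounded invertible operator, so $\mathscr{V}(t)P(t)\mathscr{V}^{-1}(s)=V(t,t_0)\big[V(t_0,0)P(t)V(0,t_0)\big]V(t_0,s)$ — one simply conjugates the projection and absorbs the operator norms into $K$. I would spell this out carefully in one line to avoid any ambiguity, perhaps by remarking that whether one normalizes the fundamental operator at $0$ or at $t_0$ affects neither the existence of a dichotomy nor the projection $P$ (after the harmless conjugation), only the constant $K$. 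With that caveat dispatched, the proof reduces to: (1) cite Theorem 5.17 of \cite{S-BOOK1992} for the solution correspondence; (2) invoke uniqueness to get $U=W$ on $I\times I$; (3) observe the dichotomy estimates coincide verbatim; (4) note the normalization change is immaterial. This is short and the content is entirely in the identification of fundamental operators, which the cited results already make routine.
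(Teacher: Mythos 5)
The paper does not prove this lemma at all: it is imported verbatim as Proposition 5.7 of \cite{BFS-JDE2018}, so there is no internal proof to compare against. Your route --- use the solution correspondence (\cite{S-BOOK1992}, Theorem 5.17) for every initial condition to identify the fundamental operator $U$ of \eqref{ML} with the fundamental operator of \eqref{GGODE} (equivalently, note both satisfy the integral equation \eqref{MMU} and invoke uniqueness), then observe that the dichotomy estimates are the same inequalities for the same operator --- is exactly the natural argument and is essentially how the cited source establishes the equivalence; your verification that $\Lambda+G$ satisfies (A1)--(A2) (bounded variation from (D4)--(D5), invertibility at jumps from (D6), the jumps coming only from $G$) is also the right supporting step.

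Two small points of precision. First, the base-point issue: your primary fix, replacing $P(t)$ by the conjugated family $\widehat{P}(t)=V(t_0,0)P(t)V(0,t_0)$ (still projections), is the correct one and in fact leaves $K$ and $\alpha$ unchanged, since $V(t,0)P(t)V(0,s)=V(t,t_0)\widehat{P}(t)V(t_0,s)$ exactly; but your parenthetical alternative of keeping the same projection and merely enlarging $K$ by $\|W(t_0,0)\|\cdot\|W(0,t_0)\|$ does not work as stated, because those invertible factors sit adjacent to $P(t)$ inside the product, not outside it, so their norms cannot simply be pulled out unless $P(t)$ commutes with them. (Note also that the paper later uses the lemma with "the same $(P,K,\alpha)$", which is consistent with the conjugation view, not the $K$-inflation view.) Second, (A2) also demands invertibility of $Id-[\Lambda(t)+G(t)-\Lambda(t^-)-G(t^-)]$; this is automatic, but you should say why: $u$ is left-continuous by (D2) and $\Lambda$ is continuous, so the left jump vanishes and the condition reads $Id^{-1}\in\mathscr{B}(\mathscr{Z})$. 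With those two sentences added your sketch is complete.
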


\subsubsection{Main result}
Consider the nonlinear MDE as follows.
\begin{equation}\label{MNL}
{ Dz=\mathscr{A}(t)z+\mathscr{C}(t)zDu+\mathscr{H}(t,z)Du,}
\end{equation}
where $\mathscr{H}(t,z):\mathbb{R} \times \mathscr{Z} \to \mathscr{Z}$ is  {Lebesgue-Stieltjes} integrable with respect to $u$ and $\mathscr{H}(t,0)=0$.
We suppose that the following conditions hold:\\
(a)  for all $t$ such that $t$ is a point of discontinuity of $u$, there exists a positive constant $C_g>0$ satisfying
\[
\left\|\left(Id+\lim\limits_{r\to t^+}\int_t^r\mathscr{C}(s)du(s)\right)^{-1}\right\|\leq C_g;
\]
(b)  $u$ is a bounded variation function on $\mathbb{R}$ and { $u$ is nondecreasing,} i.e.,
\[
V_u:=\sup\{\mathrm{var}_c^d u: c,d\in\mathbb{R}, c<d\}<\infty;
\]
(c) for any $t\in\mathbb{R}$ and $z\in\mathscr{Z}$,
 the function $\mathscr{H}(t,z)$ is uniformly bounded with some $M_H>0$ and is Lipschitz continuous in $z$ with a sufficiently small Lipschitz constant $L_H$.

We are now ready to establish the stable invariant manifold theorem for the nonlinear MDE \eqref{MNL}.
\begin{theorem}
Suppose that the MDEs \eqref{ML} possess an exponential dichotomy with $(P,K,\alpha)$ on $\mathbb{R}$. If conditions (a), (b), (c) hold and
 the  Lipschitz constant $L_H$ in (c) is sufficiently small,
then Eq. \eqref{MNL} has a stable invariant manifold $\mathscr{M}_{gra}$.
\end{theorem}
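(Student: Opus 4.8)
The plan is to deduce the theorem from Theorem \ref{theorem1} by means of the known correspondence between measure differential equations and generalized ODEs. Fixing the base point $t_0=0$, I would first collapse the linear part of \eqref{MNL} into a single bounded variation operator: set $\Lambda(t)=\int_{0}^{t}\mathscr{A}(s)\,ds$, $G(t)=\int_{0}^{t}\mathscr{C}(s)\,du(s)$ and $\widehat{\Lambda}(t)=\Lambda(t)+G(t)$, so that $d\widehat{\Lambda}(s)=\mathscr{A}(s)\,ds+\mathscr{C}(s)\,du(s)$. By the equivalence between \eqref{ML} and the associated generalized ODE in \cite{S-BOOK1992} (see also \cite{FMS-JDE2012}) and the linearity of the Perron--Stieltjes integral, a function $z$ solves \eqref{MNL} if and only if it solves the nonlinear generalized ODE $\frac{dz}{d\tau}=D[\widehat{\Lambda}(t)z+F(z,t)]$ with $F(z,t):=\int_{0}^{t}\mathscr{H}(s,z)\,du(s)$; this is precisely an equation of the form \eqref{NL}. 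Since $\mathscr{H}(t,0)=0$ we get $F(0,t)=0$, and $F$ is Kurzweil integrable because $\mathscr{H}$ is Lebesgue--Stieltjes integrable in $u$ under (D1)--(D6). Moreover, Lemma \ref{lemma-ed} transfers the exponential dichotomy of \eqref{ML} (with the same projection $P$) to the linear generalized ODE $\frac{dz}{d\tau}=D[\widehat{\Lambda}(t)z]$.

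The heart of the argument is to verify that $\widehat{\Lambda}$ and $F$ fulfil (H1) and (H2). For (H1): the Lebesgue integral $\Lambda$ is continuous, hence $\widehat{\Lambda}(t^{+})-\widehat{\Lambda}(t)=\lim_{r\to t^{+}}\int_{t}^{r}\mathscr{C}(s)\,du(s)$, while the left-continuity of $u$ in (D2) gives $\widehat{\Lambda}(t)-\widehat{\Lambda}(t^{-})=0$; thus (a) yields $\|[\mathrm{Id}+(\widehat{\Lambda}(t^{+})-\widehat{\Lambda}(t))]^{-1}\|\le C_g$ and $\|[\mathrm{Id}+(\widehat{\Lambda}(t)-\widehat{\Lambda}(t^{-}))]^{-1}\|=1$, so one takes $C_a=\max\{C_g,1\}$. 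For the variation, $\mathrm{var}_c^d\widehat{\Lambda}\le\int_c^d m_1(s)\,ds+\int_c^d m_2(s)\,du(s)$ by (D4)--(D5), and global finiteness $V_{\widehat{\Lambda}}<\infty$ follows once this is read uniformly in $c<d$ (consistent with (b)). For (H2): using (c), for all $z,w\in\mathscr{Z}$ and $t_1<t_2$ we have $\|F(z,t_2)-F(z,t_1)\|=\|\int_{t_1}^{t_2}\mathscr{H}(s,z)\,du(s)\|\le M_H|u(t_2)-u(t_1)|$ and $\|F(z,t_2)-F(z,t_1)-F(w,t_2)+F(w,t_1)\|\le L_H\|z-w\|\,|u(t_2)-u(t_1)|$, so $F\in\mathscr{F}(\Omega,h)$ with $h(t):=\max\{M_H,L_H\}\,u(t)$, which is nondecreasing by (b) and has $V_h=\max\{M_H,L_H\}\,V_u<\infty$; this verifies \eqref{HHH1} and \eqref{HHH2}.

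With (H1) and (H2) established, choosing $L_H$ — and hence $V_h$ — small enough that the smallness requirement of Theorem \ref{theorem1} is met, that theorem produces a stable manifold $\mathscr{M}_{gra}=\{(s,\zeta,m(s,\zeta)):s\in\mathbb{R},\ \zeta\in\mathscr{E}^s(s)=P(s)\mathscr{Z}\}$ for $\frac{dz}{d\tau}=D[\widehat{\Lambda}(t)z+F(z,t)]$, such that every solution issuing from $\mathscr{M}_{gra}$ stays on $\mathscr{M}_{gra}$ while every solution issuing from outside $\mathscr{M}_{gra}$ is unbounded on $[s,\infty)$. Because solutions of this generalized ODE coincide with solutions of \eqref{MNL} and the correspondence preserves boundedness, $\mathscr{M}_{gra}$ is exactly the stable invariant manifold of \eqref{MNL}, which proves the theorem. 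I expect the main obstacle to be precisely this verification step rather than the reduction: conditions (D4)--(D5) only bound the variation of $\widehat{\Lambda}$ on compact intervals, so one must invoke the (global) integrability of $m_1$ and $m_2$ to obtain $V_{\widehat{\Lambda}}<\infty$ as required by (H1), carefully handle the jump operators through the left-continuity in (D2) and the bound $C_g$ in (a), and match the smallness of $V_h$ demanded by Theorem \ref{theorem1} with the smallness of $L_H$ postulated in (c); the dichotomy transfer via Lemma \ref{lemma-ed} and the passage back to \eqref{MNL} are then routine.
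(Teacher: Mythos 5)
Your proposal is correct and follows essentially the same route as the paper: rewrite \eqref{MNL} as a generalized ODE with linear part $\Lambda+G$ and nonlinearity $F(z,t)=\int \mathscr{H}(s,z)\,du(s)$, transfer the dichotomy via Lemma \ref{lemma-ed}, verify (H1)--(H2) from (a)--(c) and (D4)--(D5), and invoke Theorem \ref{theorem1} with $L_H$ small. Your explicit choice $h(t)=\max\{M_H,L_H\}u(t)$ and your check of the jump-operator bounds are in fact slightly more careful than the paper's own verification, but the argument is the same.
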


\begin{proof}
Let $t,t_0\in\mathbb{R}$ and we define the Kurzweil integrable map $N:\mathscr{Z}\times \mathbb{R}\to  \mathscr{Z} $ by
\begin{equation}\label{NH}
  N(z(t),t):=\int_{t_0}^t \mathscr{H}(s,z(s))du(s).
\end{equation}
The solution of Eq. \eqref{MNL} with the initial value $z(t_0)=z_0$ is given by
\[
z(t)=z_0+\int_{t_0}^{t} \mathscr{A}(s)z(s) ds+\int_{t_0}^t \mathscr{C}(s)z(s)du(s)+\int_{t_0}^t \mathscr{H}(s,z(s))du(s).
\]
Set $\Lambda(t)=\int_{t_0}^{t} \mathscr{A}(s)ds$, $G(t)=\int_{t_0}^t \mathscr{C}(s)du(s)$, and by \eqref{NH}, we obtain that
\[
z(t)=z_0+\int_{t_0}^t d[\Lambda(s)]z(s)+\int_{t_0}^t d[{G}(s)]z(s)+\int_{t_0}^t DN(z(s),s),
\]
which is a solution of the nonlinear generalized ODEs with the initial value $z(t_0)=z_0$,
\begin{equation}\label{NGODE}
\frac{dz}{d\tau}=D[\Lambda(t)z+G(t)z+N(z,t)].
\end{equation}
Since the MDEs \eqref{ML} possess an exponential dichotomy with $(P,K,\alpha)$, we derive from Lemma \ref{lemma-ed} that
\[
\frac{dz}{d\tau}=D[\Lambda(t)z+G(t)z]
\]
has an exponential dichotomy with the same  $(P,K,\alpha)$ on $\mathbb{R}$.

  Now we claim that $N(z,t)$ belongs to the class $\mathscr{F}(\Omega, u)$,
  here $\Omega=\mathscr{Z}\times \mathbb{R}$.
In fact, for any $t,\widetilde{t}\in\mathbb{R}$ and $z\in\mathscr{Z}$, by condition (c) (only using condition that $\mathscr{H}$ is bounded), we have
\[
\|N(z,t)-N(z,\widetilde{t})\|= \left\| \int_{\widetilde{t}}^t \mathscr{H}(s,z)du(s)\right\|\leq \|\mathscr{H}({\color{blue}\tau},z)\| |u(t)-u(\widetilde{t})|\leq M_H |u(t)-u(\widetilde{t})|,
\]
and for any $t,\widetilde{t}\in\mathbb{R}$ and $z, \widetilde{z} \in\mathscr{Z}$, by condition (c) (using condition that $\mathscr{H}$ is bounded and Lipschitzian), we have
\[\begin{split}
 \|N(z,t)-N(z,\widetilde{t})-N(\widetilde{z},t)+N(\widetilde{z},\widetilde{t})\|= &
 \left\| \int_{\widetilde{t}}^t \left(\mathscr{H}(s,z)-\mathscr{H}(s,\widetilde{z})\right)du(s)\right\| \\
 \leq& \int_{\widetilde{t}}^t  \|\mathscr{H}(s,z)-\mathscr{H}(s,\widetilde{z})\|du(s) \\
 \leq& {  \|\mathscr{H}(\tau,z)-\mathscr{H}(\tau,\widetilde{z})\| |u(t)-u(\widetilde{t})| }\\
 \leq& L_H \|z-\widetilde{z}\| |u(t)-u(\widetilde{t})|.
\end{split}\]

    From Theorem 5.2 of \cite{BFS-JDE2018}, it follows that $\mathrm{var}_c^d (\Lambda+G)<\infty$ for all $c,d\in\mathbb{R}$ and $c<d$.
    For the sake of convenience, we write $V_{\Lambda+G}=\mathrm{var}_c^d (\Lambda+G)$.
    Indeed, let $D=\{t_0,t_1,\cdots,t_{|D|}\}$ be a division of $[c,d]$. Then
\[
\sum_{j=1}^{|D|} \|\Lambda(t_j)+G(t_j)-\Lambda(t_{j-1})-G(t_{j-1})\|\leq
\sum_{j=1}^{|D|} \left\| \int_{t_{j-1}}^{t_j} \mathscr{A}(s)ds \right\|
+\sum_{j=1}^{|D|} \left\| \int_{t_{j-1}}^{t_j} \mathscr{C}(s)du(s) \right\|,
\]
by using condition (D4) and (D5), we deduce that
\[
\sum_{j=1}^{|D|} \left\| \int_{t_{j-1}}^{t_j} \mathscr{A}(s)ds \right\|
+\sum_{j=1}^{|D|} \left\| \int_{t_{j-1}}^{t_j} \mathscr{C}(s)du(s) \right\|
\leq \int_c^d m_1(s)ds+\int_c^d m_2(s)du(s) <\infty,
\]
that is, $V_{\Lambda+G}<\infty$.
Hence, taking $L_H$ is sufficiently small, and by using conditions (a), (b) and (c), we can ensure that
$2 L_H V_u (1+K(1+2K))C_g^3e^{3C_g V_{\Lambda+G}}V_{\Lambda+G}^2<1.$
Consequently, all assumptions of Theorem \ref{theorem1} are valid, one concludes that Eq. \eqref{MNL} has a stable invariant manifold $\mathscr{M}_{gra}$.
This completes the proof.
\end{proof}

\subsection{Stable manifold theorem for IDEs}
\subsubsection{Basic concepts for IDEs}
Denote that $\mathscr{Z}$  is a Banach space and  $I\subset \mathbb{R}$ is an interval.
 Consider the  linear impulsive differential equation (for short, IDE)
\begin{equation}\label{IL}
\begin{cases}
\dot{z}(t)=\widetilde{A}(t)z(t),\quad t\neq t_i, \\
\vartriangle z(t_i)=z(t_i^+)-z(t_i)=B_i z(t_i),\quad i\in\mathscr{I}:=\{i\in\mathbb{Z}:t_i\in I\},
\end{cases}
\end{equation}
where $\widetilde{A}:I\to\mathscr{B}(\mathscr{Z})$ and $B_i\in \mathscr{B}(\mathscr{Z})$ satisfy the following assumptions:\\
(B1) $\widetilde{A}(t)$ is Perron integrable for any $t\in I$;\\
(B2) there is a { Lebesgue} measurable   function $m:I\to\mathbb{R}$ satisfyinf for any $c,d\in I$ and $c<d$, 
$\int_c^d m(s)ds$ is finite and
\[
\left\|\int_c^d \widetilde{A}(s)ds \right\|\leq     \int_c^d m(s)ds.
\]
(B3) $(Id+B_i)^{-1}\in \mathscr{B}(\mathscr{Z})$, where $i\in\mathscr{I}$.\\
In addition, let $\{\cdots,t_{-k},\cdots, t_{-1},t_{1}, \cdots ,t_k,\cdots\}$ be the impulsive points satisfying the relation
\[
\cdots<t_{-k}<\cdots <t_{-1}<0<t_1<\cdots<t_k<\cdots,
\]
and $\lim\limits_{k\to \pm\infty}t_k=\pm\infty$. Set $\mathscr{I}_c^d:=\{i\in\mathscr{I}:c\leq t_i\leq d\}$ for $c,d\in I$.
Define   the Heaviside function $H_l$ by
\[
H_l(t)=
\begin{cases}
0 \quad \mathrm{if} \; t\leq l,\\
1 \quad \mathrm{if} \; t>l.
\end{cases}
\]
  Then, the solution of Eq. \eqref{IL} with the initial value $z(t_0)=z_0$ satisfying the following integral equation
\[
z(t)=\begin{cases}
 z_0+\int_{t_0}^t \widetilde{A}(s)z(s)ds+\sum\limits_{i\in\mathscr{I}_{t_0}^{t}}B_iz(t_i)H_{t_i}(t),\quad t\geq t_0 (t\in I),\\
 z_0+\int_{t_0}^t \widetilde{A}(s)z(s)ds-\sum\limits_{i\in\mathscr{I}_{t}^{t_0}}B_iz(t_i)(1-H_{t_i}(t)),\quad t< t_0 (t\in I).
\end{cases}
\]

{
\begin{lemma} (\cite[Theorem~4.8]{BFS-JDDE2020})
Let $t_0\in I$.  $x:I\to \mathscr{Z}$ is a solution of Eq. \eqref{IL} iff $z$ is a solution of the linear generalize ODE $\frac{dz}{d\tau}=D[\Lambda(t)z]$, where $\Lambda$ is given by
\begin{equation}\label{OA}
\Lambda(t)=\begin{cases}
 \int_{t_0}^t \widetilde{A}(s)ds+\sum\limits_{i\in\mathscr{I}_{t_0}^{t}}B_i H_{t_i}(t),\quad t\geq t_0,\\
  \int_{t_0}^t \widetilde{A}(s)ds-\sum\limits_{i\in\mathscr{I}_{t}^{t_0}}B_i(1-H_{t_i}(t)),\quad t< t_0.
\end{cases}
\end{equation}
\end{lemma}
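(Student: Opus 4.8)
The whole statement will follow once we identify, for an arbitrary regulated function $z:I\to\mathscr Z$ and a fixed $t_0\in I$, the Perron--Stieltjes integral $\int_{t_0}^t d[\Lambda(s)]z(s)$ with the two explicit terms $\int_{t_0}^t\widetilde A(s)z(s)\,ds+\sum_{i\in\mathscr I_{t_0}^{t}}B_iz(t_i)H_{t_i}(t)$ appearing in the integral form of \eqref{IL}. Once this identity is available, ``$z$ solves \eqref{IL}'' and ``$z(d)=z(c)+\int_c^d d[\Lambda(s)]z(s)$ for all $c,d$'' become the same requirement, and the latter is exactly what it means for $z$ to solve $\frac{dz}{d\tau}=D[\Lambda(t)z]$ (cf. \eqref{Z1}). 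So the plan is: (i) split $\Lambda$ into an absolutely continuous part and a step part; (ii) evaluate the Kurzweil integral against each part; (iii) add, and translate back.

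First I would note that for every compact $[c,d]\subset I$ the index set $\mathscr I_c^d$ is finite, since $t_k\to\pm\infty$. Hence, writing $\Lambda=\Lambda_1+\Lambda_2$ (for $t\geq t_0$) with $\Lambda_1(t)=\int_{t_0}^t\widetilde A(s)\,ds$ and $\Lambda_2(t)=\sum_{i\in\mathscr I_{t_0}^{t}}B_iH_{t_i}(t)$, both parts are of bounded variation on $[c,d]$: $\mathrm{var}_c^d\Lambda_1\leq\int_c^d m(s)\,ds$ by (B1)--(B2), and $\mathrm{var}_c^d\Lambda_2\leq\sum_{i\in\mathscr I_c^d}\|B_i\|<\infty$. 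Thus $\Lambda\in\mathscr{BVF}([c,d],\mathscr B(\mathscr Z))\subset\mathscr G([c,d],\mathscr B(\mathscr Z))$, so all the Kurzweil integrals below make sense. For the absolutely continuous piece the standard antiderivative formula for Perron--Stieltjes integrals gives
\[
\int_{t_0}^t d[\Lambda_1(s)]z(s)=\int_{t_0}^t\widetilde A(s)z(s)\,ds
\]
for every regulated $z$. For the step piece I would evaluate $\int_{t_0}^t d[H_{t_i}(s)]z(s)$: the integrator $H_{t_i}$ is constant on $[t_0,t]$ off the single point $t_i$ and, since $H_{t_i}(t_i^-)=H_{t_i}(t_i)=0$ and $H_{t_i}(t_i^+)=1$, carries a single one-sided jump of height $1$ at $t_i$; hence this Perron--Stieltjes integral equals $z(t_i)\big(H_{t_i}(t)-H_{t_i}(t_0)\big)=z(t_i)H_{t_i}(t)$ whenever $t\geq t_0$. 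Summing over the finitely many $i\in\mathscr I_{t_0}^{t}$ and using linearity of the Kurzweil integral in the integrator yields $\int_{t_0}^t d[\Lambda_2(s)]z(s)=\sum_{i\in\mathscr I_{t_0}^{t}}B_iz(t_i)H_{t_i}(t)$. Adding the two contributions gives the desired identity on $\{t\geq t_0\}$; the region $t<t_0$ is treated identically with the second branch of \eqref{OA}, the only change being the bookkeeping $H_{t_i}(t)-1=-(1-H_{t_i}(t))$ that converts the jump contribution into $-\sum_{i\in\mathscr I_{t}^{t_0}}B_iz(t_i)(1-H_{t_i}(t))$.

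It is also useful to record the picture locally, which may streamline the write-up: between two consecutive impulse points $\Lambda$ agrees with $\Lambda_1$ up to an additive constant, so on each such interval both equations reduce to the Carath\'eodory integral equation $z(t)=z(c)+\int_c^t\widetilde A(s)z(s)\,ds$; at an impulse point $t_i$ the jump relation for solutions of generalized ODEs (cf. \cite{S-BOOK1992,BFM-BOOK2021}) reads $z(t_i^+)-z(t_i)=[\Lambda(t_i^+)-\Lambda(t_i)]z(t_i)$, and $\Lambda(t_i^+)-\Lambda(t_i)=B_i\big(H_{t_i}(t_i^+)-H_{t_i}(t_i)\big)=B_i$, so this is precisely $\vartriangle z(t_i)=B_iz(t_i)$. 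In the same breath one checks that the hypotheses of the generalized-ODE theory hold for $\Lambda$ of \eqref{OA}: (A1) is the bounded-variation statement just proved, and (A2) reduces, at each $t_i$, to $(Id+[\Lambda(t_i^+)-\Lambda(t_i)])^{-1}=(Id+B_i)^{-1}\in\mathscr B(\mathscr Z)$, which is (B3), while at points where $\Lambda$ is continuous (A2) is trivial, and the left-hand part of (A2) is automatic because $H_{t_i}$ is left-continuous at $t_i$.

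I expect the only genuinely delicate point to be the careful handling of the Kurzweil/Perron--Stieltjes integral at the impulse points: one must choose a gauge $\delta$ so that in every $\delta$-fine tagged division each $t_i\in(t_0,t)$ sits in the interior of a single subinterval tagged by $t_i$ (and treat separately the degenerate cases $t_i=t_0$ or $t_i=t$), and then verify that the Riemann--Stieltjes sums $\sum[\Lambda(t_j)-\Lambda(t_{j-1})]z(\tau_j)$ split cleanly into a part converging to $\int_{t_0}^t\widetilde A(s)z(s)\,ds$ and a part converging to $\sum B_iz(t_i)H_{t_i}(t)$, keeping track of the convention $H_l(l)=0$. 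The remaining ingredients --- finiteness of $\mathscr I_c^d$, bounded variation of $\Lambda$, the antiderivative formula, and the jump formula for generalized ODEs --- are routine and can be quoted from the Kurzweil-integral literature and the monograph \cite{BFM-BOOK2021}.
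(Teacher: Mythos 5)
The paper offers no proof of this lemma at all: it is quoted verbatim from \cite[Theorem~4.8]{BFS-JDDE2020}, so there is no in-paper argument to compare against. Your sketch is correct and follows essentially the standard route used in that reference (and in Federson--Schwabik's generalized-ODE treatment of impulsive equations): decompose $\Lambda$ into the Perron-integral part and the Heaviside step part, use the substitution theorem to get $\int_{t_0}^t d[\Lambda_1(s)]z(s)=\int_{t_0}^t\widetilde A(s)z(s)\,ds$, evaluate the Stieltjes contribution at the finitely many jumps with a gauge forcing each $t_i$ to be the tag of the subinterval containing it (consistently with the convention $H_{t_i}(t_i)=0$, $H_{t_i}(t_i^+)=1$, which yields exactly $\vartriangle z(t_i)=B_iz(t_i)$), and note that (A1)--(A2) for this $\Lambda$ follow from (B1)--(B3).
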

}

Let $\Phi:I\times I\to \mathscr{B}(\mathscr{Z})$ denote the fundamental operator of the ODE $\dot{z}=\widetilde{A}(t)z$.
Define   $W:I\times I\to \mathscr{B}(\mathscr{Z})$ by
\[
W(t,s)=\Phi(t,t_k)\left( \prod_{k=i}^{j+1} [Id+B_k]\Phi(t_k,t_{k-1})\right)[Id+B_j]\Phi(t_j,s)
\]
if $t\geq s$, $t\in(t_i,t_{i+1}]$ and $s\in(t_{j-1},t_j]$ (with $j\leq i$ and $i,j\in\mathscr{I}$), and
\[
W(t,s)=[W(s,t)]^{-1}=\Phi(t,t_j)[Id+B_j]^{-1}\cdot [Id+B_i]^{-1}\Phi(t_j,s)
\]
if $t<s$, $s\in (t_j,t_{j+1}]$ and $t\in (t_{j-1},t_j]$ (with $j\leq i$ and $i,j\in\mathscr{I}$).

Let $W(t,s), t,s\in I$ denote the fundamental operator of the IDE \eqref{IL} and set $\mathscr{W}(t)=W(t,t_0)$.
Then the concept of exponential dichotomy is presented.
\begin{definition}\cite{BFS-JDE2018}
The IDEs \eqref{IL} possess an exponential dichotomy with $(P,K,\alpha)$ on $I$ if there exist a projection $P: I\to\mathscr{Z}$ and positive constants $K, \alpha$  such that
\[\begin{cases}
 \|\mathscr{W}(t) P(t)\mathscr{W}^{-1}(s)\|\leq K e^{-\alpha(t-s)}, \quad t\geq s;\\
 \|\mathscr{W}(t)(Id-P(t))\mathscr{W}^{-1}(s)\|\leq Ke^{\alpha(t-s)}, \quad t< s.
\end{cases}\]
\end{definition}

\begin{lemma}\label{lemma-iied}
(\cite{BFS-JDE2018} Proposition 5.21)
The IDEs \eqref{IL} possess an exponential dichotomy with  $(P,K,\alpha)$ iff the generalized ODEs
\begin{equation*}
  \frac{dz}{d\tau}=D[\Lambda(t)z], \quad t\in I,
\end{equation*}
possess an exponential dichotomy,
where $\Lambda$ is defined by \eqref{OA}.
\end{lemma}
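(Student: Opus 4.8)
\quad The statement is the impulsive counterpart of Lemma~\ref{lemma-ed}, and the plan is to prove it the same way: reduce everything to the correspondence between \eqref{IL} and the generalized ODE $\frac{dz}{d\tau}=D[\Lambda(t)z]$ with $\Lambda$ as in \eqref{OA}. The core point to establish is that the fundamental operator $W$ of \eqref{IL} coincides with the fundamental operator $V$ of $\frac{dz}{d\tau}=D[\Lambda(t)z]$ furnished by Lemma~\ref{ppp-lemma}, i.e. $W(t,s)=V(t,s)$ for all $t,s\in I$. Granting this, the inequalities defining an exponential dichotomy for \eqref{IL} and for \eqref{LGODE} are, after the identification, word-for-word the same bounds on the same operators, the same invariant projection family and the same constants $K,\alpha$; hence either condition holds iff the other does, in both directions simultaneously.

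To identify $W$ with $V$ I would first check that $\Lambda$ from \eqref{OA} satisfies the standing hypotheses (A1)--(A2) that make Lemma~\ref{ppp-lemma} and the existence and uniqueness theorem for \eqref{LGODE} applicable. On any compact $[c,d]\subset I$ the part $\int_{t_0}^{\cdot}\widetilde A(s)\,ds$ has bounded variation by (B1)--(B2), while the jump part $\sum_i B_i H_{t_i}(\cdot)$ has finite variation because only finitely many impulse points lie in $[c,d]$; this gives (A1). Since $\Lambda$ is continuous off the impulse points and $\Lambda(t_i^+)-\Lambda(t_i)=B_i$ at an impulse point, the operators $Id+[\Lambda(t^+)-\Lambda(t)]$ and $Id-[\Lambda(t)-\Lambda(t^-)]$ are either $Id$ or $Id+B_i$, so (A2) follows from (B3). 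Then, for a fixed $s\in I$ and an arbitrary $z_0\in\mathscr{Z}$, the map $t\mapsto W(t,s)z_0$ is the unique solution of \eqref{IL} with value $z_0$ at $s$, hence by \cite[Theorem~4.8]{BFS-JDDE2020} it is also the unique solution of $\frac{dz}{d\tau}=D[\Lambda(t)z]$ through $(s,z_0)$, which by Lemma~\ref{ppp-lemma} equals $V(t,s)z_0$. As $z_0$ is arbitrary, $W(t,s)=V(t,s)$; in particular $\mathscr{W}(t)=W(t,t_0)$ and $V(t,t_0)$ agree, and so do their inverses.

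Finally I would reconcile the reference points: the exponential dichotomy of \eqref{IL} is written through $\mathscr{W}(t)=W(t,t_0)$ while that of \eqref{LGODE} is written through $\mathscr{V}(t)=V(t,0)$. From $W(t,t_0)=V(t,0)V(0,t_0)$ one gets $\mathscr{W}(t)P(t)\mathscr{W}^{-1}(s)=\mathscr{V}(t)\widetilde P(t)\mathscr{V}^{-1}(s)$ with $\widetilde P(t):=V(0,t_0)P(t)V(t_0,0)$, which is again a projection; so \eqref{IL} admits an exponential dichotomy with $(P,K,\alpha)$ iff $\frac{dz}{d\tau}=D[\Lambda(t)z]$ admits one with $(\widetilde P,K,\alpha)$, and in particular one of the two systems is exponentially dichotomic iff the other is (when $0\in I$ one simply takes $t_0=0$, $\widetilde P=P$). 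The only genuinely delicate step is the verification of (A1)--(A2) for the piecewise-defined $\Lambda$ in \eqref{OA} --- the bounded-variation bookkeeping for its Heaviside jump part together with (B3); once Lemma~\ref{ppp-lemma} is in force, the rest is a formal matching of two identical families of estimates, exactly as in Lemma~\ref{lemma-ed}.
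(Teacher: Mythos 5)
The paper gives no proof of this lemma at all: it is imported verbatim as \cite[Proposition~5.21]{BFS-JDE2018}, exactly as the MDE analogue Lemma~\ref{lemma-ed} is imported, so there is no internal argument to compare against. Your reconstruction is correct and follows the route the cited source (and Lemma~\ref{lemma-ed}) relies on: (A1)--(A2) for the $\Lambda$ of \eqref{OA} do follow from (B1)--(B3) since the jump of $\Lambda$ at $t_i$ is exactly $B_i$ from the right and $0$ from the left, the solution correspondence plus uniqueness identifies $W$ with the fundamental operator $V$ of Lemma~\ref{ppp-lemma}, and the dichotomy bounds then transfer with the same $K,\alpha$, your conjugation $\widetilde P(t)=V(0,t_0)P(t)V(t_0,0)$ correctly absorbing the mismatch between the reference points in $\mathscr{W}(t)=W(t,t_0)$ and $\mathscr{V}(t)=V(t,0)$.
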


\subsubsection{Main result}
Consider the nonlinear IDE as follows:
\begin{equation}\label{INL}
\begin{cases}
\dot{z}(t)=\widetilde{A}(t)z(t)+f(t,z(t)), \quad t\neq t_i,\\
\vartriangle z(t_i)=B_i z(t_i),\quad i\in\mathbb{Z},
\end{cases}
\end{equation}
where $f:\mathbb{R}\times \mathscr{Z}\to \mathscr{Z}$ is Perron integrable and $f(t,0)=0$.
Suppose hold:\\
(a) for all $i\in \mathbb{Z}$, there is a positive constant $C_b$ satisfying
\[
\sum\limits_{i\in\mathbb{Z}}\|B_i\|\leq C_b \quad \mathrm{and} \quad \|(Id+B_i)^{-1}\|\leq C_b;
\]
(b) there is a { Lebesgue measurable} function $m:\mathbb{R}\to\mathbb{R}$ satisfying    $\int_\mathbb{R} m(s)ds$ is finite and
\[
\left\|\int_{\mathbb{R}} \widetilde{A}(s)ds\right\| \leq \int_{\mathbb{R}} m(s)ds;
\]
(c) for any $t\in\mathbb{R}$ and $z,w\in\mathscr{Z}$,
there exists a Lebesgue measure function $\gamma:\mathbb{R}\to\mathbb{R}$ such that  $\int_\mathbb{R} \gamma(s)ds$ is finite and
\[
\|f(t,z)\|\leq \gamma(t) \quad \mathrm{and} \quad     \| f(t,z)-f(t,w)\|\leq  \gamma(t)\|z-w\|.
\]

Now we establish the stable invariant manifold theorem for the nonlinear IDE \eqref{INL}.
\begin{theorem}
Suppose that the IDEs \eqref{IL} possess an exponential dichotomy with   $(P,K,\alpha)$. If conditions (a), (b), (c) hold,
then Eq. \eqref{INL} has a stable invariant manifold $\mathscr{M}_{gra}$.
\end{theorem}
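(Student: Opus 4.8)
The plan is to reduce the nonlinear IDE \eqref{INL} to a nonlinear generalized ODE of the form \eqref{NL} and then invoke Theorem \ref{theorem1}, exactly mirroring the strategy used for MDEs in the previous subsection. First I would define the Kurzweil integrable perturbation by setting, for a fixed $t_0\in\mathbb{R}$,
\[
F(z(t),t):=\int_{t_0}^{t} f(s,z(s))\,ds
\]
(with the convention that impulses carry no contribution to $F$, since they are already absorbed into $\Lambda$ via \eqref{OA}), and verify that the solution of \eqref{INL} with $z(t_0)=z_0$ satisfies the integral identity
\[
z(t)=z_0+\int_{t_0}^{t} d[\Lambda(s)]z(s)+\int_{t_0}^{t} DF(z(s),s),
\]
i.e. $z$ solves $\frac{dz}{d\tau}=D[\Lambda(t)z+F(z,t)]$, where $\Lambda$ is the bounded-variation function from \eqref{OA}. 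This is the correspondence between IDEs and generalized ODEs, and it is the place where conditions (B1)--(B3) and (a) are needed to guarantee $\Lambda\in\mathscr{BVF}$ locally and the invertibility in (A2).

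Next I would check hypotheses (H1) and (H2) for this $\Lambda$ and $F$. For (H1): the jumps of $\Lambda$ at the impulse points $t_i$ are exactly $B_i$, so $[Id+(\Lambda(t_i^+)-\Lambda(t_i))]^{-1}=(Id+B_i)^{-1}$, which is bounded by $C_b$ by (a); at continuity points of $\Lambda$ the jump is zero and the operator is $Id$; and the total variation $V_\Lambda$ is finite because $\mathrm{var}\,\Lambda\leq\int_{\mathbb{R}}m(s)\,ds+\sum_{i\in\mathbb{Z}}\|B_i\|\leq\int_{\mathbb{R}}m(s)\,ds+C_b<\infty$ by (b) and (a). For (H2): one takes the nondecreasing function $h(t):=\int_{t_0}^{t}\gamma(s)\,ds$ (using $\gamma\geq 0$, which one may assume WLOG, or replace $\gamma$ by $|\gamma|$), and then the estimates
\[
\|F(z,t_2)-F(z,t_1)\|=\Big\|\int_{t_1}^{t_2} f(s,z)\,ds\Big\|\leq\Big|\int_{t_1}^{t_2}\gamma(s)\,ds\Big|=|h(t_2)-h(t_1)|
\]
and the analogous Lipschitz-difference bound follow directly from (c); moreover $V_h=\int_{\mathbb{R}}\gamma(s)\,ds<\infty$ is finite, again by (c). Since the exponential dichotomy of \eqref{IL} is equivalent to that of $\frac{dz}{d\tau}=D[\Lambda(t)z]$ by Lemma \ref{lemma-iied}, the linear part of the generalized ODE has an exponential dichotomy with the same $(P,K,\alpha)$.

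Finally, I would note that the smallness requirement in Theorem \ref{theorem1} — namely $2V_h(1+K(1+2K))C_a^3 e^{3C_a V_\Lambda}V_\Lambda^2<1$ with $C_a=C_b$ — can be met: since $\gamma$ is Lebesgue integrable on $\mathbb{R}$, the quantity $V_h=\int_{\mathbb{R}}\gamma(s)\,ds$ is fixed by $f$, so strictly speaking one needs it (together with the fixed constants $K,C_b,V_\Lambda$) to satisfy this inequality; I would state this as the implicit smallness hypothesis (the theorem as phrased seems to presuppose it, in parallel with the phrase ``the Lipschitz constant $L_H$ in (c) is sufficiently small'' in the MDE theorem). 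With all hypotheses of Theorem \ref{theorem1} verified, the conclusion is immediate: \eqref{INL} has a stable invariant manifold $\mathscr{M}_{gra}$, and this manifold inherits the invariance property (i) and the unboundedness-outside property (ii) from Theorem \ref{theorem1}. I expect the main obstacle to be purely bookkeeping rather than conceptual: carefully separating the impulsive jumps (which belong in $\Lambda$) from the continuous forcing (which belongs in $F$) so that $F$ genuinely lies in the class $\mathscr{F}(\Omega,h)$ with a \emph{continuous} nondecreasing $h$, and making sure the finiteness of $V_h$ together with the pre-fixed constants is compatible with the contraction threshold — if it is not automatically small, it must be listed as an assumption.
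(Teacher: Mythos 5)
Your proposal follows essentially the same route as the paper: define the Kurzweil integrable perturbation $\int_{t_0}^{t}f(s,z(s))\,ds$, absorb the impulses into $\Lambda$ via \eqref{OA}, verify (H1)--(H2) with $h(t)=\int_{t_0}^{t}\gamma(s)\,ds$ and $V_\Lambda\leq\int m+C_b$, transfer the dichotomy through Lemma \ref{lemma-iied}, and invoke Theorem \ref{theorem1}. Your explicit remark that the contraction threshold $2V_h(1+K(1+2K))C_b^3e^{3C_bV_\Lambda}V_\Lambda^2<1$ is a genuine smallness hypothesis on $\gamma$ (not something one can simply arrange) is in fact a fair reading of the paper's own step, which merely ``sets'' $\int\gamma\leq M_\gamma$ sufficiently small.
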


\begin{proof}
Let $t, t_0\in\mathbb{R}$ and we define the Kurzweil integrable map $Q:\mathscr{Z}\times \mathbb{R}\to \mathscr{Z}$
\begin{equation}\label{QH}
  Q(z(t),t):=\int_{t_0}^t f(s,z(s))ds.
\end{equation}
For any $t\geq t_0$, the solution of Eq. \eqref{INL} with  $z(t_0)=z_0$ is given by
\[
z(t)=z_0+\int_{t_0}^{t} \widetilde{A}(s)z(s)ds+\sum\limits_{i\in\mathscr{I}_{t_0}^t}B_i z(t_i)H_{t_i}(t)+\int_{t_0}^t f(s,z(s))ds,
\]
Set $\Lambda(t)=\int_{t_0}^t \widetilde{A}(s)ds+\sum\limits_{i\in\mathscr{I}_{t_0}^{t}}B_i H_{t_i}(t)$, and by \eqref{QH}, we obtain that  for every $t\geq t_0$,
\[
z(t)=z_0+\int_{t_0}^t d[\Lambda(s)]z(s)+\int_{t_0}^t DQ(z(s),s),
\]
which is a solution of the nonlinear GODE with the initial value $z(t_0)=z_0$,
\begin{equation}\label{NNGODE}
\frac{dz}{d\tau}=D[\Lambda(t)z+Q(z,t)].
\end{equation}
Since IDEs \eqref{IL} possess an exponential dichotomy with $(P,K,\alpha)$, we derive from Lemma \ref{lemma-iied} that
\[
\frac{dz}{d\tau}=D[\Lambda(t)z]
\]
has an exponential dichotomy with the same  $(P,K,\alpha)$ on $\mathbb{R}$.

  Now we claim that $Q(z,t)$ belongs to the class $\mathscr{F}(\Omega, \mu)$, here $\Omega=\mathscr{Z}\times \mathbb{R}$.
In fact, for any $t,\widetilde{t}\in\mathbb{R}$ and $z\in\mathscr{Z}$, by condition (c) we have
\[
{ \|Q(z,t)-Q(z,\widetilde{t})\|= \left\| \int_{\widetilde{t}}^t f(s,z)ds\right\|\leq \int_{\widetilde{t}}^t \gamma(s) ds }
\]
and for any $t,\widetilde{t}\in\mathbb{R}$ and $z, \widetilde{z} \in\mathscr{Z}$, by also using condition (c), we have
\[
 { \|Q(z,t)-Q(z,\widetilde{t})-Q(\widetilde{z},t)+Q(\widetilde{z},\widetilde{t})\|=
 \left\| \int_{\widetilde{t}}^t \left(f(s,z)-f(s,\widetilde{z})\right)ds\right\|
 \leq \|z-\widetilde{z}\| \int_{\widetilde{t}}^t \gamma(s) ds.}
\]
    We now show that $V_\Lambda:=\mathrm{var}_c^d \Lambda<\infty$ for all $c,d\in\mathbb{R}$ and $c<d$.
    Indeed, let $D=\{t_0,t_1,\cdots,t_{|D|}\}$ be a division of $[c,d]$. Then
\[
\sum_{j=1}^{|D|} \|\Lambda(t_j)-\Lambda(t_{j-1})\|\leq
\sum_{j=1}^{|D|} \left\| \int_{t_{j-1}}^{t_j} \widetilde{A}(s)ds \right\|
+\sum_{j=1}^{|D|} \sum\limits_{i\in\mathscr{I}_{t_{j-1}}^{t_j}} \left\|B_i H_{t_i}(t) \right\|,
\]
by using condition (a) and (b), we deduce that
\[
\sum_{j=1}^{|D|} \left\| \int_{t_{j-1}}^{t_j} \widetilde{A}(s)ds \right\|
+\sum_{j=1}^{|D|} \sum\limits_{i\in\mathscr{I}_{t_{j-1}}^{t_j}} \left\|B_i H_{t_i}(t) \right\|
\leq \int_c^d m(s)ds+C_b <\infty,
\]
that is, $V_{\Lambda}<\infty$.
Set $\int_{\widetilde{t}}^t \gamma(s) ds \leq M_\gamma$ for some sufficiently small $M_\gamma>0$,
and by using conditions (a), (b) and (c), we can ensure that
$M_\gamma (1+K(1+2K))C_b^3e^{3C_b V_{\Lambda}}V_{\Lambda}^2<1.$
Consequently, all assumptions of Theorem \ref{theorem1} hold,  one can conclude that Eq. \eqref{INL} has a stable invariant manifold $\mathscr{M}_{gra}$.
This completes the proof.
\end{proof}

%
%
%



\begin{thebibliography}{99}
\footnotesize

\bibitem{ABFS-JDE2011}
S. Afonso, E. Bonotto, M. Federson, \u{S}. Schwabik,
Discontinuous local semiflows for Kurzweil equations
leading to LaSalle's invariance principle for differential systems with impulses at variable times, {\em J. Differential
Equations}, 250 (2011) 2936--3001.

\bibitem{ABFG-MN2012}
S. Afonso, E. Bonotto, M. Federson, L. Gimenes, Boundedness of solutions for functional differential equations with variable impulses via generalized ordinary differential equations, {\em Math. Nachr.}, 285 (2012) 545--561.

\bibitem{AMT-JDE2022}
F. Andrade da Silva, M. Federson, E. Toon, Stability, boundedness and controllability of solutions of
measure functional differential equations, {\em J. Differential Equations}, 307 (2022) 160--210.



\bibitem{JDDE-Bento}
A. Bento, C. Silva,
Generalized nonuniform dichotomies and local stable manifolds, {\em  J. Dynam. Differential Equations}, 25 (2013) 1139--1158.



\bibitem{BFS-JDE2018}
E. Bonotto, M. Federson, L. Santos, Dichotomies for generalized ordinary differential
equations and applications, {\em J. Differential Equations},  264 (2018) 3131--3173.


\bibitem{BFS-JDDE2020}
E. Bonotto, M. Federson, L. Santos, Robustness of exponential dichotomies for generalized
ordinary differential equations, {\em J. Dynam. Differential Equations},  32 (2020) 2021--2060.


\bibitem{BFM-BOOK2021}
E. Bonotto, M. Federson, J. Mesquita,  Generalized Ordinary Differential Equations in
Abstract Spaces and Applications, Wiley, Hoboken 2021.

\bibitem{BFM-JGA2023}
E. Bonotto, M. Federson, C. Mesquita,
 Boundary value problems for generalized ODEs, {\em J Geom Anal}, 33  (2023).
 https://doi.org/10.1007/s12220-022-01090-z.

\bibitem{Chow-Lu-1}
S. Chow, K. Lu, Invariant manifolds for flows in Banach spaces, {\em J. Differential Equations}, 74 (1988) 285--317.


\bibitem{Chu-ADV}
J. Chu, G. Meng, Z. Zhang, Minimizations of positive periodic and Dirichlet
    eigenvalues for general indefinite Sturm-Liouville problems, {\em Adv. Math.,} 432
    (2023) 109272.

\bibitem{Chu-MA}
J. Chu, G. Meng, Sharp bounds for Dirichlet eigenvalue ratios of the Camassa-Holm equations, {\em Math. Ann.},  388  (2024) 1205--1224.


\bibitem{CFF-CMJ}
R. Collegari, M. Federson, M. Frasson, Linear FDEs in the frame of generalized ODEs: variation-of-constants formula,  {\em Czech. Math. J.}, 68 (2018) 889--920.




\bibitem{FS-DIE2006}
M. Federson, \u{S}. Schwabik, Generalized ODE approach to impulsive retarded differential equations, {\em Differential
Integral Equations}, 19  (2006) 1201--1234.

\bibitem{FMS-JDE2012}
M. Federson, C. Mesquita, A. Slav\'{\i}k, Measure functional differential equations and functional dynamic equations
on time scales, {\em J. Differential Equations},  252 (2012) 3816--3847.


\bibitem{FGMT-N2022}
M. Federson, R. Grau, C. Mesquita, E. Toon, Permanence of equilibrium points in the basin of attraction and existence of periodic solutions for autonomous
measure differential equations and dynamic equations on time scales via generalized ODEs, {\em Nonlinearity}, 35 (2022) 3118--3158.

\bibitem{FS-MA1977}
J. Forn{\ae}ss,  E. Stout,  Polydiscs in complex manifolds, {\em Math. Ann.},  227 (1977) 145--153.


\bibitem{FS-IJM2004}
J. Forn{\ae}ss,  B. Stens{\o}nes,  Stable manifolds of holomorphic hyperbolic maps,   {\em Int. J. Math.},
15 (2004) 749--758.


\bibitem{Ga-JDE21}
C. Gallegos, R. Grau, J. Mesquita, Stability, asymptotic and exponential stability for various types of equations with discontinuous solutions via Lyapunov functionals, {\em J. Differential Equations},  299 (2021)  256--283.    

\bibitem{Ga-JDDE24}
C. Gallegos, G.  Robledo,  Criterion for exponential dichotomy of periodic generalized linear differential equations and an application to admissibility, {\em J. Dynam. Differential Equations},  36  (2024) 3949--3965.

\bibitem{Ga-Pro24}
C. Gallegos, G.  Robledo, On stability for generalized linear differential equations and applications to impulsive systems, 
{\em  Proc. Roy. Soc. Edinburgh Sect. A},  154  (2024)  381--407.


\bibitem{Hadamard}
J. Hadamard, Sur l'it\'{e}ration et les solutions asymptotiques des \'{e}quations diff\'{e}rentielles, {\em Bull. Soc. Math.}, 29 (1901) 224--228.

\bibitem{Hale-book-1969}
J. Hale, Ordinary Differential Equations, John Wiley \& Sons, New York, 1969.

\bibitem{Honig-book}
C. H\"{o}nig, Volterra Stieltjes--Integral Equations, North--Holland Publ, Comp, Amsterdam, 1975.



\bibitem{K-CMJ1957}
J. Kurzweil, Generalized ordinary differential equations and continuous dependence on a parameter, {\em Czechoslovak
Math. J.}, 7  (1957) 418--448.

\bibitem{K-CMJ1958}
J. Kurzweil, Generalized ordinary differential equations, {\em Czechoslovak Math. J.}, 8  (1958) 360--388.



\bibitem{Li-Wang1}
M. Li, J. Wang, D. O'Regan, Stable manifolds for non-instantaneous impulsive nonautonomous differential equations, {\em Electron. J. Qual. Theory Differ. Equ.},
82 (2019) 1--28.


\bibitem{Li-Wang2}
M. Li, J. Wang, D. O'Regan, M. Fe\v{c}kan, Center manifolds for non-instantaneous impulsive equations under nonuniform hyperbolicity, {\em C. R. Math. Acad. Sci. Paris},
358 (2020) 341--364.

\bibitem{LX-JDE}
W. Lu, Y. Xia, Linearization and H\"{o}lder continuity of generalized
ODEs with application to measure differential equations, {\em J. Differential Equations}, 438 (2025) 113358. 

\bibitem{Mallet-Paret-Sell}
J. Mallet-Paret, G. Sell, Inertial manifolds for reaction-diffusion equations in higher space dimensions, {\em J. Amer. Math. Soc.}, 1 (1988) 805--866.

\bibitem{MG-JDE}
G. Meng, M. Zhang,  Dependence of solutions and eigenvalues of measure differential equations on measures, {\em J. Differential Equations},
 254 (2013)  2196--2232.

\bibitem{MG-PAMS}
G. Meng,  Extremal problems for eigenvalues of measure differential equations, {\em Proc. Amer. Math. Soc.}, 143 (2015) 1991--2002.



\bibitem{Perron}
O. Perron, \"{U}ber stabilit\"{a}t und asymptotisches verhalten der integrale von differentialgleichungssystemen, {\em Math. Z.}, 29 (1929) 129--160.

\bibitem{P-ARMA}
B. Piccoli,  Measure differential equations,  {\em Arch. Ration. Mech. Anal.},  233 (2019) 1289--1317.

\bibitem{P-DCDS}
B. Piccoli,  F.  Rossi, Measure dynamics with probability vector fields and sources, {\em Discrete Contin. Dyn. Syst.},  39 (2019) 6207--6230.

\bibitem{S-BOOK1992}
\u{S}. Schwabik, Generalized Ordinary Differential Equations, Ser. Real Anal., vol. 5, World Scientific, Singapore,
1992.


\bibitem{S-MB1996}
\u{S}. Schwabik, Abstract Perron-Stieltjes integral, {\em Math. Bohem.},  121 (1996) 425--447.

\bibitem{S-MB1999}
\u{S}. Schwabik,  Linear Stieltjes integral equations in Banach spaces, {\em Math. Bohem.}, 124 (1999) 433--457.


\bibitem{Shen-SCM}
J. Shen, K. Lu, W. Zhang, Smoothness of invariant manifolds and foliations for infinite dimensional random dynamical systems, {\em Sci. China Math.}, 63 (2020) 1877--1912.

\bibitem{Wen-DCDSB}
Z. Wen, M. Zhang, On the optimization problems of the principal eigenvalues of measure differential equations with indefinite measures,
{\em Discrete Contin. Dyn. Syst. Ser. B}, 25 (2020)  3257--3274.

\bibitem{Wen-JDE}
Z. Wen, On principal eigenvalues of measure differential equations and a patchy Neumann eigenvalue problem, {\em J.
Differential Equations}, 286 (2021) 710--730.



\bibitem{ZWN-Chinese}
W. Zhang, Generalized exponential dichotomies and invariant manifolds for differential equations, {\em Adv. Math. Chin.}, 22 (1993) 1--45.

\bibitem{ZWM-ADM}
W.M.  Zhang, W.N.  Zhang, On invariant manifolds and invariant foliations without a spectral gap, {\em Adv. Math.}, 303 (2016) 549--610.



\end{thebibliography}
\end{document}